\newcommand{\seqo}{{}^{<\omega}\omega}
\newcommand{\fhi}{\varphi}
\newcommand{\imp}{\Rightarrow}
\renewcommand{\L}{\mathcal{L}}
\newtheorem{theorem}{Theorem}[section]
\newtheorem{lemma}[theorem]{Lemma}
\newtheorem{corollary}[theorem]{Corollary}
\newtheorem{proposition}[theorem]{Proposition}
\newtheorem{question}{Question}
\newtheorem*{Vaughtconj}{Vaught's Conjecture}
\theoremstyle{definition}
\newtheorem{claim}{Claim}[theorem]
\newtheorem{defin}{Definition}
\newtheorem{example}{Example}
\theoremstyle{remark}
\newtheorem{remark}[theorem]{Remark}
\begin{document}

\title{Analytic equivalence relations and bi-embeddability}
\date{\today}
\author{Sy-David Friedman and Luca Motto Ros}
\date{\today}
\address{Kurt G\"odel Research Center for Mathematical Logic\\
  University of Vienna \\ W\"ahringer Stra{\ss}e 25 \\ 
 A-1090 Vienna \\
Austria}
\email{sdf@logic.univie.ac.at \\ luca.mottoros@libero.it}
\thanks{The authors would like to thank the FWF (Austrian Research
  Fund) for generously supporting this research through  Project
  number P 19898-N18.} 
\keywords{Analytic equivalence relation, analytic quasi-orders, bi-embeddability, Borel reducibility}
\subjclass[2000]{03E15}

\maketitle

\begin{abstract}
 Louveau and Rosendal \cite{louros} have shown that the relation of
bi-embeddability for countable graphs as well as for many
other natural classes of countable structures is complete
under Borel reducibility for analytic equivalence relations.
This is in strong contrast to the case of the isomorphism
relation, which as an equivalence relation on graphs (or on
any class of countable structures consisting of the models
of a sentence of $\L_{\omega_1\omega}$) is far from complete (see
\cite{louros, friedmanstanley}). 

In this article we strengthen the results of \cite{louros} by showing that
not only does bi-embeddability give rise to analytic equivalence
relations which are complete under Borel reducibility, but in fact
\emph{any} analytic equivalence relation is \emph{Borel
equivalent} to such a relation. This result and the techniques introduced 
 answer questions raised in \cite{louros} 
about the comparison between isomorphism and bi-embeddability.
Finally, as in \cite{louros} our results apply
not only to classes of countable structures defined by
sentences of $\L_{\omega_1\omega}$, but also to discrete
metric or ultrametric Polish spaces, compact metrizable
topological spaces and separable Banach spaces, with various
notions of embeddability appropriate for these classes, as well as to
actions of Polish monoids.
\end{abstract}

\section{Introduction}

Given two $n$-ary analytic (i.e.\ $\boldsymbol{\Sigma}^1_1$) relations
$R , R'$ defined on standard Borel spaces $X,X'$, we say that $R$ is
\emph{Borel reducible} to $R'$ ($R \leq_B R'$ in symbols) if there is
a Borel measurable function 
$f \colon X \to X'$ such that
\[ \forall x_1, \dotsc, x_n \in X (R(x_1, \dotsc , x_n) \iff R'(f(x_1),
\dotsc , f(x_n)).\]
We say that $R$ and $R'$ are \emph{Borel equivalent} ($R \sim_B R'$ in
symbols) if both $R \leq_B
R'$ and $R' \leq_B R$, and that $R$ is
\emph{complete} if $R' \leq_B R$ for any $n$-ary analytic
relation $R'$.

The quasi-order $\leq_B$ can be used to measure the relative
complexity of the relations $R$ and $R'$, and has been used
(explicitly or implicitly) in many areas of mathematics to solve
various classification problems --- see \cite{hjorth} for
a brief but informative discussion on this topic. For this reason, the
relation $\leq_B$ has been extensively studied in the case $n=1$, and
in the case $n=2$ when  restricting either to quasi-orders (reflexive and transitive relations) or to
equivalence relations (symmetric quasi-orders).
Logic provides a nice example of analytic equivalence relation, namely the isomorphism
relations (denoted by $\cong$) on the class of countable models of some
$\L_{\omega_1 
  \omega}$-sentence, $\L$ a countable language: unfortunately, it turns
out that these relations are in a sense too special, as there are other relations (such as $E_1$) which are 
not even Borel reducible to $\cong$. This means the the relation of
isomorphism is very far from being able to produce (up to Borel equivalence) all analytic equivalence
relations.
More generally, one can see that the latter
gives rise to an extremely complex structure under Borel reducibility, in which many
``pathologies'' (such as e.g.\ infinite descending chains, or infinite
antichains) appear, even in very simple subclasses.

Regarding completeness, there are of course
complete analytic equivalence relations, but, to our
knowledge, the unique technique to produce examples which are
``concrete'' from the mathematical point of view is to first find
examples of 
complete analytic quasi-orders $R$ and then pass to the associated
equivalence relation $E_R  = R \cap R^{-1}$, a method developed by
Louveau and Rosendal in their \cite{louros}. Among other things, in
that paper they
proved quite surprisingly that when considering the natural
counterpart of isomorphism, namely embeddability, on countable graphs,
one gets a complete analytic quasi-order (denoted by $\sqsubseteq$):
this in particular implies that the analytic equivalence relation
of bi-embeddability (denoted\footnote{We would like to warn the reader that in this paper, as in \cite{louros}, the symbol $\equiv$ denotes bi-embeddability and \emph{not} elementary equivalence.} by $\equiv$) on countable graphs is complete. Building
on the work of Louveau and Rosendal, we show in this paper that the
embeddability relation has 
 the further stronger ``universal'' property that \emph{any} analytic
 quasi-order 
is indeed Borel equivalent to $\sqsubseteq$ restricted to the class of countable graphs
satisfying a corresponding $\L_{\omega_1 \omega}$-sentence, whence the
similar result regarding analytic equivalence relations and
bi-embeddability. As we will see in Remark \ref{remeffective}, there is an \emph{effective} version of this result. Moreover, following \cite{louros}, the same result can be
 extended to the context of
analysis and monoid actions.

We finally want to stress that the Borel equivalences obtained here
are of a very special kind, and give rise to a stronger relationship
(which was already considered as an interestening notion in
\cite{friedmanstanley})  between
arbitrary analytic equivalence relations and bi-embeddability. In
fact, the
existence of a Borel reduction $f$ between
analytic equivalence relations $E$ and $F$ on standard Borel spaces
$X$ and $Y$ implies that its lifting $\hat{f} \colon X/_E \to Y/_F \colon
[x]_E \mapsto [f(x)]_F$ is well-defined and injective: therefore $E
\leq_B F$ if and only if there is a Borel $f \colon X \to Y$ such that
$\hat{f}$ is a well-defined embedding of $X/_E$ into $Y/_F$, and,
consequently,  $E
\sim_B F$ if and only if there are Borel functions $f \colon X \to Y$
and $g \colon Y \to X$ such that $\hat{f}$ and $\hat{g}$ are
well-defined embeddings of $X/_E$ into $Y/_F$ and of $Y/_F$ into
$X/_E$, respectively. Note, however, that with this definition the
embeddings $\hat{f}$ and $\hat{g}$ may have very little in common. A
stronger requirement (already proposed in \cite{friedmanstanley}) 
 would be to ask for reductions $f$ and $g$ such
that $\hat{g} = \hat{f}^{-1}$, so that $\hat{f}$ is indeed an
isomorphism between $X/_E$ and $Y/_F$: in this case $E$ and $F$ are
said to be \emph{Borel isomorphic}\footnote{Borel isomorphism is \emph{strictly} finer than Borel equivalence. In fact, consider a closed set $C \subseteq {}^\omega \omega \times {}^\omega \omega$, where ${}^\omega \omega$ denotes the Baire space, whose first projection is all of ${}^\omega \omega$ but $C$ has no Borel uniformization (see e.g.\ \cite[Exercise 18.17]{kechris}). Now define $E$ as the equivalence relation on the disjoint union of $C$ and any uncountable Polish space $X$ defined by $x E y$ if and only if either $x,y \in X$ and $x=y$, or else $x,y \in C$ and $p_0(x) = p_0(y)$, where $p_0$ denotes the projection on the first coordinate. It is clear that $E$ is Borel equivalent to the equality relation on $X$ (use $p_0$ to build a reduction of $E$ into equality on $X$, and the identity function for the other direction), but that they cannot be Borel isomorphic because a witness to this fact would also give a Borel uniformization of $C$.}.
As it can be easily checked, our construction will give that for every
analytic equivalence 
relation $E$ on a standard Borel space $X$ there is an $\L_{\omega_1
  \omega}$-sentence $\fhi$ such that $E$ is not only Borel equivalent,
but Borel isomorphic to the bi-embeddability relation on the class
$Mod_\fhi$ of countable models of $\fhi$ (and the witness of this fact
will also 
witness at the same time that equality on $X$ is Borel isomorphic to
isomorphism on 
$Mod_\fhi$). The same strong property will be provided also for the
cases regarding analysis and monoid actions.

\section{Notation and preliminaries}\label{sectionpreliminaries}

Let $\omega$ denote the set of natural numbers.
Given a nonempty set $A$, we will denote by ${}^{<\omega} A$ the collection of
all finite sequences $s$ of elements of $A$, and denote by $|s|$ its
length. For each $i<|s|$, $s(i)$ will denote the $(i+1)$-st
element of $s$, so that $s = \langle s(i) \mid i <
|s|\rangle$. Moreover, we will write $s {}^\smallfrown t$ for the
concatenation of $s$ and $t$ (but for simplicity of notation we will
simply write e.g.\ $n
{}^\smallfrown t$ rather than $\langle n \rangle {}^\smallfrown t$).
If $A = A_0 \times \dotsc \times A_k$, we will identify each element
$s \in {}^{< \omega}A$ with a sequence $(s_0, \dotsc, s_k)$ of
sequences of same length such that $s_i  \in {}^{<\omega} A_i$
for each $i \leq k$. A \emph{tree} $T$ on $A$ is simply a subset of ${}^{<
  \omega} A$ closed under initial subsequences. If $T$ is a tree on
$A$ we call the \emph{body} of $T$ the set $[T]$ of all $\omega$-sequences
$x$ of elements of $A$ such that $x \restriction n \in T$ for each $n
\in \omega$ (where $x \restriction n$ denotes the restriction of $x$
to its first $n$ digits). Moreover, if $T$ is a tree on $A \times
\omega$, the \emph{projection} $p[T]$ of $T$ is the collection of
$\omega$-sequences $x$ of elements of $A$ such that for some $\omega$-sequence $y$
of natural numbers $(x \restriction n, y \restriction n) \in T$ for
each $n \in \omega$. 

A special role in the paper will be played by the Cantor space
${}^\omega 2$ of all binary sequences of length
$\omega$: in fact, since any two uncountable Polish spaces are
Borel-isomorphic and all the notions considered here depend only on the
Borel-structure of the spaces involved, we can restrict our attention
without loss of 
generality to quasi-orders and
equivalence relations defined on ${}^\omega 2$.

Finally, we need to briefly recall some notation, definitions, and basic
results from \cite{louros}. Given two sequences $s,t \in \seqo$ of the
same length, we
put $s \leq t$ if $s(i) \leq t(i)$ for every $i < |s|=|t|$, and 
define $s+t \in \seqo$ by setting $(s+t)(i) = s(i)+t(i)$. Given a tree
$T$ on $X \times \omega$, we say that $T$ is \emph{normal} if
given $u \in {}^{<\omega}X$ and $s \in \seqo$, $(u,s) \in T$ implies
$(u,t) \in T$ for every $s \leq t$. It is well-known that any analytic
subset of ${}^\omega 2 \times {}^\omega 2$ (hence, in particular, any
analytic quasi-order $R$ on ${}^\omega 2$) is the projection of a tree
on $2 \times 2 \times \omega$: in  \cite[Theorem 2.4]{louros},
Louveau and
Rosendal proved the following stronger result, in which $R$ is viewed as the
projection of a normal tree $S$ on $2 \times 2 \times \omega$ such
that the reflexivity and transitivity properties of $R$ are
``mirrored'' by corresponding local properties of $S$.

\begin{proposition}[Louveau-Rosendal]\label{propnormalform}
  Let $R$ be any analytic quasi-order on ${}^\omega 2$. Then there is a
  normal tree $S$ on $2 \times 2 \times \omega$ such that:
  \begin{enumerate}[i)]
  \item $R = p[S]$;
\item for every $u \in {}^{< \omega}2$ and $s \in \seqo$ of the same
  length, $(u,u,s) \in S$;
\item for every $u,v,w \in {}^{<\omega}2$ and $s,t \in \seqo$ of the
  same length, if $(u,v,s) \in S$ and $(v,w,t) \in S$ then $(u,w,s+t)
  \in S$.
  \end{enumerate}
\end{proposition}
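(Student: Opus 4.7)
The plan is to start from a normal tree $T_0$ on $2 \times 2 \times \omega$ satisfying $R = p[T_0]$. Such a $T_0$ exists because any analytic set is the projection of a tree, and normality can be arranged by the obvious saturation --- add $(u, v, t)$ whenever $(u, v, s)$ is already in the tree with $s \leq t$ of the same length --- which preserves both the projection and the tree property.

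Next I would define $S$ so as to enforce (ii) and (iii) by taking $(u, v, s) \in S$ iff $|u| = |v| = |s|$ and there exist $k \geq 0$, intermediates $u = u_0, u_1, \ldots, u_k = v$ in ${}^{|u|} 2$, and witnesses $s_1, \ldots, s_k \in {}^{|u|} \omega$ with $\sum_{i=1}^{k} s_i \leq s$ coordinate-wise, such that for each $i$ either $u_{i-1} = u_i$ or $(u_{i-1}, u_i, s_i) \in T_0$. Verifying that $S$ is a normal tree satisfying (ii) and (iii) is routine: (ii) follows by taking $k = 0$; (iii) by concatenating two path decompositions; normality by monotonicity of the sum constraint; and the tree property by restricting the path data (using that $T_0$ is itself a tree).

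The substantive work lies in property (i). The inclusion $R \subseteq p[S]$ is immediate from $T_0 \subseteq S$ (via $k = 1$); for the converse, suppose $(x, z) \in p[S]$ is witnessed by $y^* \in {}^\omega \omega$, so that at each level $n$ there is some path from $x \restriction n$ to $z \restriction n$. After pruning trivial edges, let $k_n$ be the minimum length of such a path. For each $K \geq 1$, I would form the tree $\mathcal{T}^K$ whose nodes at level $n$ are the length-$K$ paths at level $n$ (with diagonal padding extending shorter paths), ordered by coordinate-wise restriction. Each $\mathcal{T}^K$ is finitely branching since the intermediates $u_i$ live in ${}^n 2$ and the witnesses $s_i$ are bounded coordinate-wise by $y^* \restriction n$. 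For $K$ chosen so that $\mathcal{T}^K$ is nonempty at cofinally many (hence all) levels, K\"onig's lemma yields an infinite branch, producing $y_0 = x, y_1, \ldots, y_K = z$ in ${}^\omega 2$ and $t_1, \ldots, t_K \in {}^\omega \omega$ with each $(y_{i-1}, y_i, t_i) \in [T_0]$ or $y_{i-1} = y_i$; in either case $(y_{i-1}, y_i) \in R$, and transitivity of $R$ yields $(x, z) \in R$.

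The principal obstacle is justifying a uniform choice of $K$, i.e., bounding $k_n$ in $n$. The starting observation is that $k_n$ is non-decreasing, since any length-$\ell$ path at level $n+1$ restricts to a length-$\ell$ path at level $n$ (with possibly new trivial edges that can only collapse on shortening). Ruling out $k_n \to \infty$ --- or otherwise extracting a coherent finite chain --- is the delicate point and requires a careful combinatorial analysis exploiting the finite coordinate-wise budget $y^* \restriction n$ for the sum $\sum_i s_i^n$ together with the normality of $T_0$ to bound the number of genuinely distinct non-trivial edges that can appear in a minimum-length path.
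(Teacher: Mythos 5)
Your construction does not establish (i), and the difficulty you flag at the end is not a loose end but a fatal gap: for an arbitrary normal tree $T_0$ with $p[T_0]=R$, the path-closure $S$ you define can satisfy $p[S]\supsetneq R$, because the minimal path lengths $k_n$ really can tend to infinity while all witnesses stay identically zero. Concretely, let $R=\Delta\cup\{(w01^\omega,w10^\omega):w\in{}^{<\omega}2\}$, where $\Delta$ is the diagonal; this is a closed quasi-order (transitive because no point with finitely many $1$'s is a point with cofinitely many $1$'s). Reading $u\in{}^{n}2$ as a binary numeral with least significant digit last, put $T_0=\{(u,v,s):|u|=|v|=|s|\text{ and }(v=u\text{ or }v=u+1)\}$. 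This is a normal tree (a successor pair restricts to a successor-or-equal pair, and the witness coordinate is unconstrained) and one checks $p[T_0]=R$. But at level $n$ the successor steps chain $0^n$ to $1^n$ in $2^n-1$ steps, each with witness $0^n$, so $(0^\omega,1^\omega,0^\omega)\in[S]$ while $(0^\omega,1^\omega)\notin R$ (in fact here $p[S]={\leq_{lex}}$). So your monotonicity observation on $k_n$ points the wrong way: nothing rules out $k_n\to\infty$, and no counting of ``genuinely distinct non-trivial edges'' can recover (i), since (i) is simply false for this $S$.

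The missing idea is to make the witness $s$ itself control the length of the path, so that the finite budget needed for K\"onig's lemma is built into the definition rather than hoped for. Declare $(u,v,s)\in S$ iff $|u|=|v|=|s|$ and there are $k\leq s(0)$, sequences $u=u_0,\dots,u_k=v$ in ${}^{|u|}2$ and $s_1,\dots,s_k$ with $\sum_{i}s_i\leq s$, such that for each $i$ either $u_{i-1}=u_i$ or $(u_{i-1},u_i,s_i)\in T_0$. All your verifications then go through: (ii) with $k=0$; (iii) because $k+k'\leq s(0)+t(0)=(s+t)(0)$; normality and the tree property as before; and $R\subseteq p[S]$ by replacing a witness $y$ for $(x,z)\in[T_0]$ with $y+\langle 1,0,0,\dots\rangle$ (using normality of $T_0$). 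For $p[S]\subseteq R$ your K\"onig argument now works verbatim with the uniform bound $K=y^*(0)$: the tree of diagonally padded length-$K$ paths is finitely branching and infinite at every level, an infinite branch yields $x=y_0,\dots,y_K=z$ with each consecutive pair equal or in $p[T_0]=R$, and reflexivity plus transitivity of $R$ finish the proof. (The paper does not reprove this proposition but cites it from Louveau--Rosendal, where exactly this device appears.)
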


From this one can easily get the following refinement, which will be
needed for our result.

\begin{corollary}\label{cornormalform}
  Let $R$ be any analytic quasi-order on ${}^\omega 2$. Then there is a
  normal tree $S$ on $2 \times 2 \times \omega$ which satisfies
  i)--iii) of Proposition \ref{propnormalform} and
\begin{enumerate}[iv)]
 \item for every $u,v \in {}^{<\omega}2$ of the same
   length, $(u,v,0^{|u|}) \in S$ implies $u=v$.
\end{enumerate}
\end{corollary}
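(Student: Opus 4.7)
The plan is to take the tree $S'$ provided by Proposition \ref{propnormalform} for the analytic quasi-order $R$ and prune it to a subtree $S$ by insisting that every triple $(u,v,s)$ with $u \neq v$ carry at least one strictly positive coordinate of $s$ at or before the first position where $u$ and $v$ disagree. Concretely I would set
\[
S = \{(u,v,s) \in S' : u = v \text{ or } \exists i \leq k_0(u,v)\ (s(i) > 0)\},
\]
where $k_0(u,v) = \min\{i < |u| : u(i) \neq v(i)\}$. Condition iv) is then immediate, since when $u \neq v$ the all-zero sequence $0^{|u|}$ has no positive entry.

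Next I would check tree-closure and normality. Tree-closure works because if $u \restriction k \neq v \restriction k$ then $k_0(u \restriction k, v \restriction k) = k_0(u,v) < k$, so the positive coordinate of $s$ at or before $k_0(u,v)$ still appears in $s \restriction k$; normality of $S$ follows from that of $S'$ together with the observation that a positive coordinate of $s$ is also a positive coordinate of any $t \geq s$. Condition ii) holds vacuously, the extra constraint being empty on the diagonal. For i), $p[S] \subseteq p[S'] = R$ is automatic, and conversely if $(x,y) \in R$ with $x \neq y$ is witnessed through $S'$ by some $z \in {}^\omega \omega$, then by the normality of $S'$ we may replace $z$ with the sequence obtained from $z$ by bumping its value at coordinate $k_0(x,y)$ up by one; this still witnesses $R(x,y)$ through $S'$, and now each of its finite initial segments satisfies the positivity requirement.

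The only step requiring more than one line is condition iii). Given $(u,v,s), (v,w,t) \in S$ with $u \neq w$, Proposition \ref{propnormalform}(iii) gives $(u,w,s+t) \in S'$. Letting $k_0 = k_0(u,w)$, at coordinate $k_0$ either $u(k_0) \neq v(k_0)$ or $v(k_0) \neq w(k_0)$, and so either $k_0(u,v) \leq k_0$ or $k_0(v,w) \leq k_0$. In the first case the defining condition for $(u,v,s) \in S$ yields some $i \leq k_0(u,v) \leq k_0$ with $s(i) > 0$, whence $(s+t)(i) > 0$; the second case is symmetric using $t$. Either way $(u,w,s+t) \in S$.

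I expect the only real obstacle to be choosing the right definition of $S$. The naive attempt of simply deleting every triple $(u,v,0^{|u|})$ with $u \neq v$ destroys tree-closure as soon as $u$ and $v$ happen to agree on a proper initial segment. Keying the positivity requirement to the \emph{first} disagreement position of $u$ and $v$, as above, is exactly what lets tree-closure, normality, and condition iii) go through simultaneously.
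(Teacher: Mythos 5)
Your proof is correct, and it follows the same basic strategy as the paper's (prune the tree $S'$ supplied by Proposition \ref{propnormalform}), but your execution differs in a way that matters. The paper's own proof is a one-liner: discard each element of the form $(u,v,0^{|u|})$ with $u\neq v$ and ``check that the new tree is normal and still satisfies i)--iii).'' As you observe, this literal deletion need not leave a tree: an element $(u,v,s)$ with $u\neq v$ and $s\neq 0^{|u|}$ survives the deletion while its initial segment $(u\restriction k,\, v\restriction k,\, 0^{k})$ is removed whenever $u\restriction k\neq v\restriction k$ and $s\restriction k=0^{k}$, a configuration that normality of $S'$ does nothing to rule out. Your definition --- keep $(u,v,s)$ only if $u=v$ or $s$ has a strictly positive entry at or before the first position where $u$ and $v$ disagree --- is precisely the set of nodes of $S'$ none of whose initial segments has the forbidden form, i.e.\ the canonical downward-closed repair of the paper's deletion. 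Your verifications are all sound: tree-closure and normality are immediate from the choice of $k_0(u,v)$; property ii) is untouched on the diagonal; property iii) follows from the observation that the first disagreement of $u$ and $w$ must already be a disagreement of $u,v$ or of $v,w$; and the inclusion $R\subseteq p[S]$ is recovered by bumping the witness $z$ at coordinate $k_0(x,y)$, which is legitimate level-by-level by normality of $S'$. So your write-up is not merely a valid proof taking the intended route --- it supplies the detail that the paper's terse argument leaves to the reader and that, read literally, the paper gets wrong.
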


\begin{proof}
For $u \neq v \in {}^{<\omega}2$ of the same length, discard each
element of the form
$(u,v,0^{|s|})$ from the tree $S$ given by Proposition
\ref{propnormalform}, and  check that the new tree is normal and still
satisfies i)--iii).
\end{proof}

A function $f \colon \seqo \to \seqo$ is said to be \emph{Lipschitz}
if $s \subseteq t \imp f(s) \subseteq f(t)$ and $|s| = |f(s)|$ for
each $s,t \in \seqo$. Consider now the space $\mathcal{T}$ of all
normal trees on $2 \times \omega$, and for $S,T \in \mathcal{T}$ put
$S \leq_{max} T$ if and only if there exists a Lipschitz function $f
\colon \seqo \to \seqo$ such that $(u,s) \in S \imp (u,f(s)) \in T$
for every $u \in {}^{<\omega}2$ and $s \in \seqo$ of the same length
(this in particular implies $p[S] \subseteq p[T]$).

\begin{theorem}[Louveau-Rosendal]\label{theomax}
The quasi-order $\leq_{max}$ is complete for analytic quasi-orders.
\end{theorem}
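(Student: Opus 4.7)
The plan is to show that every analytic quasi-order $R$ on ${}^\omega 2$ Borel reduces to $\leq_{max}$ (reflexivity and transitivity of $\leq_{max}$ are immediate from the identity being Lipschitz and from Lipschitz maps being closed under composition, and the relation is obviously $\boldsymbol{\Sigma}^1_1$). Fix such an $R$ and apply Corollary \ref{cornormalform} to obtain a normal tree $S$ on $2 \times 2 \times \omega$ with $R = p[S]$ and satisfying the local reflexivity and transitivity conditions (ii)--(iii). For each $x \in {}^\omega 2$ I define the ``slice through $x$ in the middle coordinate'':
\[
T_x = \{(u,s) \in {}^{<\omega}2 \times \seqo \mid |u| = |s| \text{ and } (u, x \restriction |u|, s) \in S\}.
\]
Since $S$ is a tree, $T_x$ is a tree on $2 \times \omega$; normality of $T_x$ is inherited from normality of $S$; and since membership of $(u,s)$ in $T_x$ depends only on $x \restriction |u|$, the assignment $x \mapsto T_x$ is a continuous map ${}^\omega 2 \to \mathcal{T}$. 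The claim is that it is the required reduction.

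For the forward direction, suppose $x R y$ and pick a witness $\alpha \in {}^\omega \omega$ with $(x \restriction n, y \restriction n, \alpha \restriction n) \in S$ for every $n$. Define $f \colon \seqo \to \seqo$ by $f(s) = s + (\alpha \restriction |s|)$; this is clearly Lipschitz. Given $(u, s) \in T_x$, we have $(u, x \restriction |u|, s) \in S$, and combining this with $(x \restriction |u|, y \restriction |u|, \alpha \restriction |u|) \in S$ via the local transitivity (iii) yields $(u, y \restriction |u|, f(s)) \in S$, i.e.\ $(u, f(s)) \in T_y$. Hence $T_x \leq_{max} T_y$ via $f$.

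For the backward direction, let a Lipschitz $f$ witness $T_x \leq_{max} T_y$. Local reflexivity (ii) gives $(x \restriction n, x \restriction n, 0^n) \in S$ for each $n$, so $(x \restriction n, 0^n) \in T_x$, hence $(u, f(s)) \in T_y$ specialised to this case gives $(x \restriction n, y \restriction n, f(0^n)) \in S$ for every $n$. Because $f$ is Lipschitz the sequences $f(0^n)$ form a $\subseteq$-chain with union some $\alpha \in {}^\omega \omega$ satisfying $\alpha \restriction n = f(0^n)$; this $\alpha$ witnesses $(x, y, \alpha) \in [S]$, i.e.\ $x R y$. The only real conceptual step is to guess the slicing: once one fixes the middle coordinate of $S$ at $x \restriction |u|$ and treats the first coordinate of $S$ as the new tree's ``universal'' parameter $u$, the local reflexivity (ii) and local transitivity (iii) slot in exactly where they are needed, so I do not expect any serious obstacle beyond identifying this slice.
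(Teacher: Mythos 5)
Your proof is correct and follows exactly the Louveau--Rosendal argument that the paper sketches after the theorem statement: your slice $T_x$ is precisely their tree $S^x=\{(u,s)\mid (u,x\restriction |u|,s)\in S\}$, and the two directions are verified via the local transitivity and local reflexivity properties of the normal form in just the way the original proof does. (Using Corollary~\ref{cornormalform} instead of Proposition~\ref{propnormalform} is harmless; the extra clause iv) is simply not needed here.)
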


In their proof, Louveau and Rosendal considered an arbitrary analytic
quasi-order $R$ on ${}^\omega 2$ together with the tree $S$ given by
Proposition \ref{propnormalform}: then they defined for every $x \in
{}^\omega 2$ the normal tree $S^x = \{(u,s) \mid (u,x \restriction
|u|,s) \in S \}$ 
and showed that $xRy$ if and only if $S^x \leq_{max} S^y$ (for $x,y
\in {}^\omega 2$). If we now use Corollary \ref{cornormalform} rather than
Proposition \ref{propnormalform} to carry out the same proof, we get
the additional property that
 the continuous map which sends $x$ to $S^x$ is now
injective (and moreover becomes also open in its range):
if $x \neq y$ and  $s$ is such
that $s \subseteq x$ but $s 
\nsubseteq y$ then $(s,0^{|s|}) \in S^x$ but $(s,0^{|s|}) \notin
S^y$. Therefore, given a quasi-order $R$ on ${}^\omega 2$ and $x \in
{}^\omega 2$, from 
now on we will denote by $S^x$ the normal tree defined as above
\emph{but using the tree $S$ given by Corollary
  \ref{cornormalform}}.\\

The next ingredient for our proof
is the collection $OCT$ of the \emph{ordered (countable)
  combinatorial trees}, i.e.\ the collection of those $G = \langle
U_G, G, \leq_G \rangle$ such that $\langle U_G, G \rangle$ is a
\emph{combinatorial tree} (that is a connected and  acyclic graph)
and $\leq_G$ is any partial transitive relation on $U_G$ which will be
called the \emph{order}\footnote{This evident abuse
of terminology is justified by the fact that in all constructions
below, $\leq_G$ will always be either a linear well-founded  order (in
the usual sense) or
its strict part.} of $G$. Note that $OCT$ ``includes''
the collection $CT$ of all combinatorial trees considered in
\cite{louros}, as each of them can
be canonically identified with the corresponding ordered combinatorial
tree obtained by adjoining the empty set as order (and this
identification clearly preserves the relations of embeddability and
isomorphism). By this identification, one immediately gets from
\cite[Theorem 3.1]{louros} that
$\sqsubseteq_{OCT}$, the relation of
embeddability on $OCT$, is  a
complete analytic quasi-order. However, we will need a slightly
different proof which will be given in Theorem
\ref{theorcomplete}. From the construction given
  in our new argument  one could also restrict attention to
  \emph{rooted} $OCT$ (ordered combinatorial trees with a special
  distinguished element called root), or put many restrictions on
  $\leq_G$,
requiring it to be reflexive and antisymmetric (i.e.\ an order in the
common sense),  connected (for each pair of elements $t,t' \in U_G$ there
is $t'' \in U_G$ such that $t'' \leq_G t,t'$),
 linear (for each pair $t,t' \in U_G$ either $t \leq_G t'$ or $t'
 \leq_G t$), well-founded (there is no $\leq_G$-descending chain) and
 so on. One obviously could also enlarge the class of
 structures under consideration and get that  the embeddability relation on any
 class of structures between $OCT$ and the collection of all $\L$-structures,
 where $\L = \{P,Q\}$ is a language with just two binary relations, is a complete
 analytic quasi-order.

\begin{theorem}\label{theorcomplete}
The relation $\sqsubseteq_{OCT}$ of embeddability on $OCT$ is
complete for analytic quasi-orders.
\end{theorem}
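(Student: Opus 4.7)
The plan is to reduce $\leq_{max}$ to $\sqsubseteq_{OCT}$ in a Borel way; by Theorem \ref{theomax} this yields completeness of $\sqsubseteq_{OCT}$. To each normal tree $S \in \mathcal{T}$ I would associate an ordered combinatorial tree $G_S$ whose underlying graph contains a distinguished \emph{root} $r_S$ together with a ray (the \emph{spine}) $r_S, \ell_0^S, \ell_1^S, \dots$, and whose order $\leq_{G_S}$ is declared to be the linear well-founded order $r_S <\ell_0^S <\ell_1^S <\cdots$, with no other comparabilities. At the vertex $\ell_n^S$ one attaches, for every $(u,s) \in S$ with $|u|=|s|=n$, a \emph{gadget} $\Gamma_{u,s}$ which rigidly encodes both the binary string $u$ (through marked subgraphs indexed by the position $i<n$ and by the bit $u(i)$) and the sequence $s$ (through, say, the number of pendant vertices appended to a designated vertex of the $i$-th subgadget, so that $s(i) \leq s'(i)$ corresponds to an induced embedding of the $i$-th subgadget of $\Gamma_{u,s}$ into the $i$-th subgadget of $\Gamma_{u,s'}$). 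The map $S \mapsto G_S$ is clearly Borel.

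For the forward direction, suppose $S \leq_{max} T$ is witnessed by a Lipschitz $f \colon \seqo \to \seqo$. I would define $\varphi \colon G_S \to G_T$ by $\varphi(r_S) = r_T$ and $\varphi(\ell_n^S) = \ell_n^T$, and extend over each gadget $\Gamma_{u,s}$ of $G_S$ by embedding it into the gadget $\Gamma_{u, f(s)}$ of $G_T$. The latter belongs to $G_T$ because $(u,s) \in S$ implies $(u, f(s)) \in T$; moreover $|f(s)|=|s|$ by the Lipschitz condition, and using normality of $T$ we may replace $f(s)$ by its coordinatewise maximum with $s$ (still Lipschitz), so that each value subgadget of $\Gamma_{u,s}$ sits as an induced subgraph inside the corresponding value subgadget of $\Gamma_{u, f(s)}$. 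These partial maps fit together into a single embedding of combinatorial trees, which preserves $\leq_{G_S}$ since spine is sent onto an initial segment of spine.

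The converse is the main obstacle. Given an embedding $\varphi \colon G_S \hookrightarrow G_T$, I first observe that $\varphi$ must carry the $\leq_{G_S}$-chain $r_S <\ell_0^S <\ell_1^S <\cdots$ to a $\leq_{G_T}$-chain of the same order type; since the only such chain in $G_T$ is its own spine, and $r_T$ is its $\leq_{G_T}$-minimum, we obtain $\varphi(r_S)=r_T$ and $\varphi(\ell_n^S)=\ell_n^T$ for every $n$. Consequently $\varphi$ sends each gadget $\Gamma_{u,s}$ of $G_S$ (attached at $\ell_n^S$) into the union of the gadgets attached at $\ell_n^T$, and rigidity of the encoding forces the image to lie in a single target gadget $\Gamma_{u, s'}$ with $s(i) \leq s'(i)$ for every $i<n$. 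Setting $f(s) := s'$ then yields a function witnessing $S \leq_{max} T$: equal lengths are automatic, the coordinatewise domination together with normality of $T$ gives $(u,f(s)) \in T$, and the Lipschitz property $s \subseteq t \Rightarrow f(s) \subseteq f(t)$ follows from the fact that the gadget for $s$ is canonically placed as a subgadget of the gadget for any extension of $s$, so $\varphi$ must act coherently across levels. The technical core of the argument is designing the gadgets rigidly enough that the ``forced image'' statement really holds: this is exactly the point where the additional flexibility afforded by the order $\leq_G$ can be exploited to streamline the purely graph-theoretic rigidity used in \cite[Theorem 3.1]{louros}.
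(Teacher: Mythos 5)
Your overall strategy --- reduce $\leq_{max}$ to $\sqsubseteq_{OCT}$ and invoke Theorem \ref{theomax} --- is exactly the paper's, but your construction diverges from it in a way that reopens precisely the difficulty the paper is designed to sidestep. The paper keeps Louveau--Rosendal's combinatorial tree $G_T$ verbatim (in particular the backbone $G_0$, a doubled copy of the tree $\seqo$, with the gadget for $(u,s)$ attached \emph{at the vertex $s$}) and merely superimposes an order; the hard direction is then free: an embedding of ordered combinatorial trees is in particular an embedding of the underlying graphs, and the second half of the proof of \cite[Theorem 3.1]{louros} already extracts a Lipschitz witness of $S \leq_{max} T$ from such a graph embedding. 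You instead discard the $\seqo$-backbone and hang all level-$n$ gadgets off a single spine vertex $\ell_n$. This destroys the structure from which the Lipschitz function is actually read off. Concretely: (a) your $f(s) := s'$ is defined gadget-by-gadget, so a priori it depends on the pair $(u,s)$, not on $s$ alone (this is repairable --- there are only $2^n$ strings $u$ of length $n$, so take coordinatewise maxima and use normality of $T$ --- but you do not do it); (b) much more seriously, the monotonicity $s \subseteq t \Rightarrow f(s) \subseteq f(t)$ is asserted to follow because ``the gadget for $s$ is canonically placed as a subgadget of the gadget for any extension of $s$'', but nothing in your construction relates $\Gamma_{u,s}$ to $\Gamma_{u',t}$ for $s \subseteq t$: they are attached at different spine vertices with no connecting structure. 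In Louveau--Rosendal this coherence is exactly what the copy of $\seqo$ inside $G_0$ provides, since $f$ is defined as the restriction of the embedding to that copy.

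Two further gaps. First, your spine-identification step assumes an embedding sends the order-minimum to the order-minimum; embeddings only preserve comparabilities, so a shift $\ell_n^S \mapsto \ell_{n+c}^T$ is consistent with both the graph and the order data as you describe them, and ruling it out requires additional gadgetry you have not supplied (the paper avoids this because its order $\leq_T$ restricted to $\seqo$ is the \emph{same} fixed well-order for every $T$, which is only used for the isomorphism rigidity of Lemma \ref{lemma1}, not for the reduction itself). Second, the entire rigidity claim --- that the image of $\Gamma_{u,s}$ is forced into a single target gadget $\Gamma_{u,s'}$ with the same $u$ and with $s$ dominated coordinatewise --- is deferred (``the technical core of the argument is designing the gadgets rigidly enough''); but that forcing argument, via distances and valences, \emph{is} the proof in \cite{louros}, and in the forward direction you also need $f$ injective (obtainable from \cite[Lemma 2.8]{marconerosendal}, as the paper notes) to keep distinct gadgets of $G_S$ from colliding. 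As written, the proposal is a plausible programme rather than a proof; the paper's route, which changes nothing in the Louveau--Rosendal graph and pays only the small price of checking that the obvious embedding also preserves the new order, is the economical way to get the theorem.
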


\begin{proof}
The proof is almost identical to the one of
 \cite[Theorem 3.1]{louros}. To each normal tree $T \in \mathcal{T}$ on $2
\times \omega$
associate a combinatorial tree $G_T$ defined as in \cite{louros}:
\begin{enumerate}[i)]
\item fix an enumeration $\theta \colon {}^{<\omega}2 \to \omega$ such
  that $|s| \leq |t|$ implies $\theta(s) \leq \theta(t)$; 
\item ``double'' the set ${}^{<\omega} \omega$, that is adjoin a new
  vertex $s^*$ for each $s \in {}^{<\omega}\omega \setminus \{
  \emptyset \}$,  and put an edge between $s^*$ and $s$, and between
  $s^*$ and the predecessor $s^-$ of 
$s$ as a sequence (this combinatorial tree, which does not depend on $T$, is denoted
by $G_0$); 
\item for each pair $(u,s) \in T$ add vertices $(u,s,x)$, where $x =
  0^{(k)}$ or $x = 0^{(2 \theta(u)+2)} {}^\smallfrown 1 {}^\smallfrown
  0^{(k)}$, and then link $(u,s,\emptyset)$ to $s$ and $(u,s,x)$ to
  $(u,s,x^-)$ (where $x^-$ is again the predecessor of $x$ as a sequence). 
\end{enumerate}

Now define the order
$\leq_T = \leq_{G_T}$ on $U_{G_T}$ in the following way:
for $s,t \in \seqo$ put $s
\preceq t$ if and only if $|s| < |t|$ or $|s| = |t|$ and $s
\leq_{lex}t$ (the symbol $\prec$ will denote the strict part of
$\preceq$).  Now for $g,g' \in U_{G_T}$, $s,t \in \seqo$, and $u,v,x,y \in
{}^{<\omega}2$  put $g \leq_T g'$ in each of (and only) the following cases:
\begin{itemize}
\item $g=s$ and either $g'=t^*$ or $g'=(v,t,y)$, or $g=s^*$ and $g'=(v,t,y)$;
\item $g=s$, $g'=t$, and $s \preceq t$;
\item $g=s^*$, $g'=t^*$, and $s \preceq t$;
\item $g=(u,s,x)$, $g'=(v,t,y)$ and
\[ (s \prec t) \vee ({s=t}\wedge {u \prec v}) \vee ({s=t} \wedge {u=v}
\wedge {x \preceq y}).\] 
\end{itemize}
\noindent
(Note in particular that $\leq_T$ is a well-founded linear order of
length $ \leq \omega^4$ but one could also define a suitable linear order of
type $\omega$ as well, although this would make it considerably more
difficult to check that the functions defined below are really
embeddings, i.e.\ that they preserve the orders.)

Now we will show that the map $T \mapsto G_T$ is a reduction of
 $\leq_{max}$ to $\sqsubseteq_{OCT}$.
Assume first that $S,T$ are normal trees on $2 \times \omega$ such that
$S \leq_{max} T$: as observed in Lemma 2.8 of \cite{marconerosendal}, this can be witnessed by a Lipschitz
$\leq_{lex}$-preserving function $f
\colon \seqo \to \seqo$, that is by an $f$ such that $s \preceq s' \iff f(s) \preceq
f(s')$ for every $s,s' \in \seqo$ (in particular $f$ is injective).
 Now embed $G_S$ into $G_T$ sending $s$ to $f(s)$, $s^*$ to
$f(s)^*$, and $(u,s,x)$ to $(u,f(s),x)$, and check that both the
graph and the order relations  are preserved.

For the other direction, if $G_S
\sqsubseteq G_T$ than $G_S$ embeds in $G_T$ as a combinatorial tree
(disregarding the orders) and so $S \leq_{max} T$ by the second part
of the proof of
 \cite[Theorem 3.1]{louros}.
\end{proof}

 \begin{remark}\label{remarkoc}
Let us note here that if the coding of $G_T$ as a structure on
$\omega$ is chosen in a careful way, e.g.\ as in the proof of Theorem
\ref{theor2}, then the map which sends an arbitrary normal tree $T$
into (the code of) $G_T$, which is clearly Borel, has very low
topological complexity: in fact, it is continuous and open in its image.
\end{remark}

\section{The main results}\label{sectionmain}

We now want to prove our main results, namely that there are various
natural quasi-orders arising in model theory, analysis and descriptive
set theory such 
that each analytic
quasi-order  is
indeed Borel equivalent to that specific quasi-order (on a suitable
class of objects). This gives also several characterizations of both 
analytic quasi-orders and analytic equivalence relations, and shows that
the notions of embeddability, homomorphism, and weak-homomorphism
among countable structures (for model 
theory), the notions of isometric embeddability among discrete metric
or ultrametric Polish 
spaces, of continuous embeddability among
compact metrizable topological spaces, and of linear isometric
embeddability among separable Banach spaces (for analysis), and the
notion of closed or Borel action of  Polish monoids (for
descriptive set theory) are able to 
capture the 
great complexity of the whole structure of analytic quasi-orders and analytic
equivalence relations (up to  Borel equivalence).

\subsection{Morphisms in Model Theory}\label{sectionmodeltheory}

The advantage of having used $OCT$ in the previous section (rather
than $CT$ as in \cite{louros}) emanates from the following
two lemmas.

\begin{lemma}\label{lemma1}
 Let $S,T$ be normal trees, and $G_S$ and $G_T$ be defined as in the
 proof of Theorem \ref{theorcomplete}. If $S \neq T$ then $G_S \not\cong G_T$.
\end{lemma}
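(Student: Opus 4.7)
The plan is to extract $T$ from the ordered combinatorial tree $G_T$, by showing that any isomorphism $\phi \colon G_S \to G_T$ of ordered combinatorial trees is forced to be the identity on the ``base'' part $G_0$ and to send each ``tag'' of $G_S$ to a tag of $G_T$ with the \emph{same} parameter $u$, thereby pinning down $S = T$.

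First, the order $\leq_T$ splits the vertices of $G_T$ into three consecutive intervals: all vertices of the form $s$ (for $s \in {}^{<\omega}\omega$) come first, then all vertices $s^*$, then all $(u,s,x)$. The first two intervals, equipped with $\preceq$, are precisely the labelled well-orders $({}^{<\omega}\omega,\preceq)$ and $({}^{<\omega}\omega \setminus \{\emptyset\},\preceq)$, which depend only on the fixed enumeration $\theta$, not on $T$. In particular they have the same order type in $\leq_S$ and $\leq_T$. Any order-isomorphism $\phi$ sends initial segments to initial segments of the same length, so it must map the ``$s$-block'' of $\leq_S$ onto the ``$s$-block'' of $\leq_T$, and similarly for the ``$s^*$-block''. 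The restriction of $\phi$ is therefore an order-automorphism of a well-order, which can only be the identity. Hence $\phi \restriction G_0 = \mathrm{id}_{G_0}$.

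Next, for each $(u,s) \in S$ the corresponding tag in $G_S$ (i.e.\ the subgraph formed by the vertices $(u,s,x)$) is a connected component of $G_S \setminus G_0$; graph-theoretically it is an infinite ray emanating from $(u,s,\emptyset)$ (which is adjacent to $s$) with a unique side branch sprouting at $(u,s,0^{(2\theta(u)+2)})$, the unique degree-$3$ vertex of the tag. Since $\phi(s) = s$ and $\phi$ is a graph isomorphism, the whole tag is mapped to some tag of $G_T$ attached at $s$, say the one coming from $(u'',s) \in T$. Preservation of graph distance and degree forces the distance from $s$ to the degree-$3$ vertex to agree, so $2\theta(u)+3 = 2\theta(u'')+3$, and injectivity of $\theta$ yields $u = u''$. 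In particular, $(u,s) \in T$ whenever $(u,s) \in S$, so $S \subseteq T$; applying the same reasoning to $\phi^{-1}$ gives $T \subseteq S$, whence $S = T$.

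The main (minor) obstacle is the first step: verifying that the decomposition of $\leq_T$ into its three blocks is determined purely by the order, so that $\phi$ is forced to send $G_0$ to $G_0$. Once this rigidity of the base is in place, the encoding of $u$ via the graph-distance position of the degree-$3$ branching point in each tag makes the recovery of $T$ from $G_T$ immediate.
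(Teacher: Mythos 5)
Your proof is correct and takes essentially the same route as the paper's: you force the isomorphism to be the identity on $\seqo$ (since the order there does not depend on $T$), and then recover $(u,s)\in T$ from the position of the unique valence-$3$ vertex of the tag attached at $s$, concluding $S\subseteq T$ and by symmetry $T\subseteq S$. The only difference is that you spell out, via the three-block decomposition of $\leq_T$ and the rigidity of well-orders, why the isomorphism must fix $\seqo$ pointwise --- a step the paper's proof asserts without detail.
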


\begin{proof}
 Suppose $i$ is an isomorphism between $G_S$ and $G_T$. Since the
 orders $\leq_S$ and $\leq_T$ coincide on $\seqo$ we have that $i
 \restriction \seqo$ must be the identity. Suppose now $(u,s) \in S$:
 as in the proof of Theorem 3.1 in \cite{louros}, the point $(u,s,0^{2
   \theta(u)+2})$ must be sent to a point of the form $(u,i(s),0^{2\theta(u)+2}) =
 (u,s,0^{2 \theta(u)+2})$, and the existence of such a point witnesses $(u,s) \in T$. Hence $S
 \subseteq T$. Exchanging the role of $S$ and $T$ and using
 $i^{-1}$ instead of $i$ one gets $T \subseteq S$, and therefore $S=T$.
\end{proof}

As already noted in \cite{louros}, the domain of each ordered
combinatorial tree of the form $G_T$ is formally
different from  $\omega$,
but nevertheless one can easily code (Borel-in-$T$) such a structure in another
structure $\hat{G}_T$ with domain $\omega$: for simplicity of
presentation, as in the following lemma, we will often confuse the
two structures $G_T$ and $\hat{G}_T$.
Let $S_\infty$ be the Polish group of permutations on
$\omega$, $\mathcal{L} = \{P,Q\}$ be the relational language with just two
binary symbols,
and $j_\mathcal{L} \colon S_\infty \times Mod_{\mathcal{L}} \to
  Mod_{\mathcal{L}}$ be the usual (continuous) action of $S_\infty$ on
    $Mod_\mathcal{L}$, the collection of all countable
    $\L$-structures. For every normal tree $S$ on $2 \times
    \omega$ and $p \in S_\infty$, put $G_{S,p} =
    j_{\mathcal{L}}(p,G_S)$, where $G_S$ is the ordered combinatorial
    tree obtained from $S$ as in the proof of Theorem \ref{theorcomplete}.

\begin{lemma}\label{lemma2}
 For every distinct $p,q \in S_\infty$ and every  normal
 tree $S$ on $2 \times
 \omega$, we have $G_{S,p} \neq G_{S,q}$.
\end{lemma}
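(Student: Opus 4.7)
The plan is to reduce Lemma \ref{lemma2} to showing that the coded structure $\hat{G}_S$ on $\omega$ has a trivial automorphism group, and then to exploit the rigidifying effect of the order $\leq_S$ built in the proof of Theorem \ref{theorcomplete}.

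First I would unpack the action $j_\L$: by definition each $p \in S_\infty$ is an isomorphism from any $M \in Mod_\L$ onto $j_\L(p,M)$. Hence $G_{S,p} = G_{S,q}$ as structures with domain $\omega$ is equivalent to $q^{-1} p$ lying in the stabilizer of $\hat{G}_S$, i.e.\ in $\text{Aut}(\hat{G}_S)$. Thus, contrapositively, the lemma reduces to showing that $\text{Aut}(\hat{G}_S) = \{\text{id}_\omega\}$.

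For the rigidity step, I would invoke the observation explicitly flagged in the parenthetical at the end of the proof of Theorem \ref{theorcomplete}: by construction $\leq_S$ is a well-founded linear order on $U_{G_S}$ (of length at most $\omega^4$), and under the chosen Borel coding it becomes the interpretation on $\omega$ of the binary symbol $Q$ of $\L$. A routine transfinite induction shows that the only order-automorphism of a well-ordered set is the identity (at stage $\alpha$, once every $\beta < \alpha$ is fixed, the least element above the fixed portion is forced to map to itself). Consequently no non-identity permutation of $\omega$ can preserve $Q^{\hat{G}_S}$, and a fortiori $\text{Aut}(\hat{G}_S) = \{\text{id}_\omega\}$, as required.

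I do not anticipate any real obstacle: the entire content of the argument is the rigidity of well-orderings, which is precisely the payoff of passing from $CT$ to $OCT$ and of endowing each $G_S$ with the linear order $\leq_S$ rather than leaving it unordered as in \cite{louros}.
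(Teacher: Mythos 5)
Your proof is correct and rests on the same key fact as the paper's: $\leq_S$ is a well-founded linear order, hence rigid. The paper runs the rigidity argument directly on the pair $G_{S,p}$, $G_{S,q}$ (taking the $\leq_S$-minimal point where $p$ and $q$ disagree and showing the two order relations differ at $(p(g),q(g))$), whereas you factor it through the observation that $G_{S,p}=G_{S,q}$ if and only if $q^{-1}\circ p\in\mathrm{Aut}(\hat{G}_S)$ and then invoke the rigidity of well-orderings --- a clean repackaging of essentially the same argument.
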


\begin{proof}
 Let $\leq_{S,p}$ and $\leq_{S,q}$ be the linear orders on $G_{S,p}$ and
 $G_{S,q}$, respectively. Let $g$ be the $\leq_S$-minimal element of
 $G_S$ such that $p(g) \neq q(g)$. We claim that $p(g) \leq_{S,p}
 q(g)$ but $p(g) \nleq_{S,q} q(g)$ (this implies that the two
 structures $G_{S,p}$ and $G_{S,q}$ are different). Assume toward a
 contradiction that $p^{-1}(q(g)) <_S g$: then $q(p^{-1}(q(g)))
 <_{S,q} q(g)$. But as $p(p^{-1}(q(g))) = q(g)$, the previous
 inequality shows that $p(p^{-1}(q(g))) \neq q (p^{-1}(q(g)))$,
 contradicting the $\leq_S$-minimality of $g$. Therefore $g \leq_S
 p^{-1}(q(g))$, which implies $p(g) \leq_{S,p} q(g)$.

Assume now towards a contradiction that $p(g) \leq_{S,q} q(g)$. Since
$p(g) \neq q(g)$ (by hypothesis) we get $p(g) <_{S,q} q(g)$, which
implies $q^{-1}(p(g)) <_S g$. Arguing as before (with $p$ and $q$
exchanged), we get a contradiction with the $\leq_S$-minimality of
$g$. Therefore $p(g) \nleq_{S,q} q(g)$, as required.
\end{proof}

Now we are ready to prove our first main theorem.

\begin{theorem}\label{theor1}
 If $R$ is an analytic quasi-order on a standard Borel space $X$, then
 there is an $\mathcal{L}_{\omega_1\omega}$-sentence $\fhi$  
   such that $R$ is
 Borel equivalent to embeddability on $Mod_{\fhi} = \{ x \in
 Mod_{\mathcal{L}} \mid x \vDash \fhi\}$.
\end{theorem}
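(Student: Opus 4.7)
The plan is as follows. As already observed in Section \ref{sectionpreliminaries}, I may assume $X = {}^\omega 2$. First I would apply Corollary \ref{cornormalform} to obtain a normal tree $S$ on $2 \times 2 \times \omega$ with $R = p[S]$ and satisfying property (iv). By the discussion preceding Lemma \ref{lemma1}, the continuous map $x \mapsto S^x$ is then injective and satisfies $x R y \iff S^x \leq_{max} S^y$. Composing with the reduction $T \mapsto G_T$ from Theorem \ref{theorcomplete} yields a Borel function $F \colon {}^\omega 2 \to Mod_{\mathcal{L}}$ given by $F(x) = G_{S^x}$, which Borel-reduces $R$ to $\sqsubseteq$ on its image.

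Next I would identify the right sentence $\fhi$. Consider the Borel map $\Phi \colon {}^\omega 2 \times S_\infty \to Mod_{\mathcal{L}}$ defined by $\Phi(x,p) = G_{S^x,p}$. Its range $\mathcal{C}$ is the saturation of $\{G_{S^x} \mid x \in {}^\omega 2\}$ under the natural $S_\infty$-action, and hence is closed under isomorphism. I would then verify that $\Phi$ is \emph{injective}: if $x \neq y$, then $S^x \neq S^y$ by (iv) of Corollary \ref{cornormalform}, so Lemma \ref{lemma1} gives $G_{S^x} \not\cong G_{S^y}$, whence $\Phi(x,p) \not\cong \Phi(y,q)$ and in particular $\Phi(x,p) \neq \Phi(y,q)$; and if $x=y$ but $p \neq q$, Lemma \ref{lemma2} gives $\Phi(x,p) \neq \Phi(x,q)$. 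The Lusin-Souslin theorem then yields that $\mathcal{C}$ is Borel, and by Lopez-Escobar's theorem there is an $\mathcal{L}_{\omega_1\omega}$-sentence $\fhi$ with $Mod_\fhi = \mathcal{C}$.

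It remains to check that $R$ is Borel equivalent to $\sqsubseteq$ on $Mod_\fhi$. The map $F$ already takes values in $Mod_\fhi$ and witnesses $R \leq_B \sqsubseteq {\restriction} Mod_\fhi$. For the converse, since $\Phi$ is Borel and injective with Borel range, Lusin-Souslin also gives that $\Phi^{-1} \colon Mod_\fhi \to {}^\omega 2 \times S_\infty$ is Borel. I would let $g \colon Mod_\fhi \to {}^\omega 2$ be the composition of $\Phi^{-1}$ with projection onto the first coordinate. Then for $G_i = \Phi(x_i, p_i)$ one has $G_i \cong G_{S^{x_i}}$, so by isomorphism-invariance of embeddability together with Theorem \ref{theorcomplete},
\[ G_1 \sqsubseteq G_2 \iff G_{S^{x_1}} \sqsubseteq G_{S^{x_2}} \iff S^{x_1} \leq_{max} S^{x_2} \iff x_1 R x_2, \]
so $g$ is the desired Borel reduction of $\sqsubseteq {\restriction} Mod_\fhi$ to $R$. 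The main technical obstacle is ensuring that the target class is \emph{Borel} and not merely analytic, since this is what Lopez-Escobar requires: this is precisely where the strengthened Corollary \ref{cornormalform} together with the two separation Lemmas \ref{lemma1} and \ref{lemma2} earn their keep, by delivering the injectivity of $\Phi$ needed to invoke Lusin-Souslin.
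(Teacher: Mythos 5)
Your proof is correct and follows essentially the same route as the paper's: the paper simply packages your two reductions by introducing the auxiliary quasi-order $R'$ on ${}^\omega 2 \times S_\infty$ and using the injective Borel map $(x,p) \mapsto G_{S^x,p}$ together with its Borel inverse, which is exactly your $\Phi$, with $F = \Phi(\cdot,id)$ and $g = \pi_0 \circ \Phi^{-1}$. The injectivity argument via Corollary \ref{cornormalform} and Lemmas \ref{lemma1} and \ref{lemma2}, the Lusin--Souslin step to get a Borel invariant range, and the appeal to Lopez-Escobar are identical to the paper's.
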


\begin{proof}
 As already noted, we can assume $X={}^\omega 2$. For $x \in {}^\omega
 2$ let $S^x$  be defined as in the 
 previous section, so that the
 map which sends $x$ to $S^x$ is Borel and injective. Let $R'$ be the
 quasi-order on $X \times S_\infty$ defined by $(x,p) R' (y,q) \iff x
 R y$. It is clear that $R$ and $R'$ are Borel equivalent (as witnessed by the
 maps $x \mapsto (x,id)$ and $(x,p) \mapsto x$), hence it is enough
 to prove the theorem for $R'$.
Our plan is to find a Borel and invariant subset $Z$ of $Mod_\L$ and
a reduction of $R'$ to the embeddability relation
$\sqsubseteq$ with range $Z$,  and then use the well-known fact (due to Lopez-Escobar, see e.g.\   \cite[Theorem 16.8]{kechris}) that such a $Z$ must
coincide with $Mod_\fhi$ for some $\L_{\omega_1 \omega}$-sentence
$\fhi$. 

Using the same notation of the previous lemmas, consider the map $f$
which sends 
 $(x,p)$ to $G_{S^x,p}$, which is clearly a Borel (in fact, continuous) 
 map. First note that $f$ reduces $R'$ to the embedding relation
 $\sqsubseteq$, as
\[ (x,p) R' (y,q) \iff x R y \iff S^x \leq_{max} S^y \iff G_{S^x}
\sqsubseteq G_{S^y} \iff G_{S^x,p} \sqsubseteq G_{S^y,q}.\]
We now claim that $f$ is injective. Assume $(x,p) \neq (y,q)$: if $x
\neq y$ then $S^x \neq S^y$, and therefore by Lemma \ref{lemma1} we get
that $G_{S^x} \not\cong G_{S^y}$, which in turn implies that
$G_{S^x,p} \not\cong G_{S^y,q}$ as well (so that, in
particular, $G_{S^x,p}$ and $G_{S^y,q}$ must be different). If instead
$x=y$ but $p \neq q$, 
then by Lemma \ref{lemma2} we get $G_{S^x,p} \neq G_{S^x,q} =
G_{S^y,q}$ and hence we are done.

Since $X \times S_\infty$ is a Borel set and $f$ is Borel and injective, we
get that $f(X \times S_\infty) \subseteq Mod_{\mathcal{L}}$ is a Borel
set and that $f^{-1}$ is Borel as well. But $f(X \times S_\infty)$ is
clearly invariant under isomorphism, so $f(X \times S_\infty) =
Mod_\fhi$ for some $\mathcal{L}_{\omega_1\omega}$-sentence
$\fhi$. Since $f$ and $f^{-1}$ witness the 
Borel equivalence between $R'$ and embeddability on $Mod_\fhi$, this
concludes the  proof.
\end{proof}

\begin{remark}\label{remeffective}
There is an \emph{effective} version of Theorem \ref{theor1} (as well as of Theorem \ref{theor1'} and the corollaries below). Using the fact that any $\Sigma^1_1$ subset $A$ of the Cantor space ${}^\omega 2$ is the projection of a recursive (not necessarily pruned) tree on $2 \times \omega$, one can check that the proof of Corollary \ref{cornormalform} gives that a $\Sigma^1_1$ quasi-order $R$ on ${}^\omega 2$ is also the projection of a \emph{recursive} normal tree with all the requested properties, and this can in turn be used to check that, once we have chosen a suitable coding for the target ordered combinatorial tree (e.g.\ a coding similar to the one which will be explicitly given in Theorem \ref{theor2}), the function $f$ from ${}^\omega 2 \times S_\infty$ to $Mod_\L$ constructed in the previous proof is $\Delta^1_1$-recursive (in fact,  $\Sigma^0_1$-recursive). As $f$ is injective and has a $\Delta^1_1$ domain, we get from the effective version of the properties of Borel injective functions (see e.g.\ \cite[Exercise 4D.7]{mosch}) that ${\rm range}(f) \in \Delta^1_1$ and $f^{-1}$ is $\Delta^1_1$-recursive. By \cite[Theorem 3.14]{vandenboom}, any invariant  $\Delta^1_1$-subset of $Mod_\L$ is the class of models of some \emph{computable} infinitary formula, that is of a formula in the effective version of the infinitary logic $\L_{\omega_1 \omega}$ where countable conjunction and disjunction are allowed only on effectively enumerable sets of formulas. Therefore we have the following: for every $\Sigma^1_1$ quasi-order $R$ on ${}^\omega 2$ there is a computable infinitary formula $\fhi$ such that $R$ is $\Delta^1_1$-equivalent (in fact, $\Delta^1_1$-isomorphic) to embeddability on $Mod_\fhi$, where $\Delta^1_1$-equivalence is simply the effectivization of $\sim_B$. Such result can then be naturally extended to all $\Sigma^1_1$ quasi-orders defined on spaces which are $\Delta^1_1$-isomorphic to ${}^\omega 2$, that is to $\Sigma^1_1$ quasi-orders defined on \emph{recursively presented} Polish spaces.
\end{remark}

Now we will concentrate on other model-theoretic notions of morphism,
namely homomorphism and weak-homomorphism. For simplicity of notation, the definitions are given just for the language $\L$ under consideration in this section, but can clearly be generalized in a straightforward way to arbitrary languages.

\begin{defin}
If $G,G'$ are two $\mathcal{L}$-structures (where $\L$ is again the
language containing just the two binary relational symbols $P$ and
$Q$), we say that $G$ is 
\emph{homomorphic} to $G'$ if there is a function $h$ such that for
every $g_0,g_1$ in the domain of $G$, $g_0 P^G g_1 \iff h(g_0) P^{G'}
h(g_1)$ and $g_0 Q^G g_1 \iff h(g_0) Q^{G'} h(g_1)$ (such an $h$ will
be called a \emph{homomorphism} between $G$ and $G'$). 

Moreover, we say that $G$ is \emph{weakly-homomorphic} to $G'$ just in
case there is a function $h$ (called 
\emph{weak-homomorphism}) such that for
every $g_0,g_1$ in the domain of $G$, $g_0 P^G g_1 \imp h(g_0) P^{G'}
h(g_1)$ and $g_0 Q^G g_1 \imp h(g_0) Q^{G'} h(g_1)$.
\end{defin}

The relevance of the notion of
homomorphism between graphs is briefly described in
\cite{louros}. One should also note that embeddings are just
\emph{injective} homomorphisms.

\begin{theorem}\label{theor1'}
The relation of homomorphism (resp.\ weak-homomorphism) on $OCT$ is a
complete analytic quasi-order.
Moreover, if $R$ is an analytic quasi-order on a standard Borel space
$X$ then there is an $\mathcal{L}_{\omega_1\omega}$-sentence $\fhi$  
  such that $R$ is Borel equivalent to
 the relation of homomorphism (resp.\ weak-homomorphism)
on $Mod_{\fhi}$.
\end{theorem}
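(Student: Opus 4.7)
I would follow closely the strategy of Theorem \ref{theor1}. The key new ingredient is a \emph{rigidity} claim: in a suitable variant of the construction of $G_T$ from the proof of Theorem \ref{theorcomplete}, any weak-homomorphism between two structures of the form $G_S, G_T$ is automatically an embedding, so that the relations $\sqsubseteq_{OCT}$, homomorphism and weak-homomorphism all coincide on the image of $T \mapsto G_T$.

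To set this up, I would first modify the order $\leq_T$ in that construction by replacing every occurrence of $\preceq$ with its strict part $\prec$. This makes $\leq_T$ an \emph{irreflexive} linear order on the full vertex set $U_{G_T}$, still well-founded of length $\leq \omega^4$, which is still within the class $OCT$ by the footnote preceding Theorem \ref{theorcomplete}. By direct verification, all the previous results survive: the Lipschitz, $\leq_{lex}$-preserving map used in the proof of Theorem \ref{theorcomplete} induces an embedding in the new strict order exactly as before, so $S \leq_{max} T$ still implies $G_S \sqsubseteq G_T$; and Lemmas \ref{lemma1} and \ref{lemma2} remain valid for the same reasons.

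The main technical step is the rigidity claim. Given a weak-homomorphism $h \colon G_S \to G_T$, I would argue in three substeps. First, $h$ is injective: for any two distinct $g_0, g_1 \in U_{G_S}$, by linearity of $\prec_S$ we have without loss of generality $g_0 \prec_S g_1$, hence $h(g_0) \prec_T h(g_1)$ by weak-homomorphism, so $h(g_0) \neq h(g_1)$ by irreflexivity of $\prec_T$. Second, $h$ preserves non-edges: if $h(g_0)$ and $h(g_1)$ were adjacent in $G_T$ while $g_0, g_1$ were non-adjacent in $G_S$, then the unique path of length $\geq 2$ connecting them in the tree $G_S$ would map under $h$ (by edge preservation and injectivity) to a path of the same length in $G_T$, producing a cycle and contradicting acyclicity of $G_T$. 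Third, $h$ preserves non-orders: if $h(g_0) \prec_T h(g_1)$, then by linearity of $\prec_S$ one of $g_0 \prec_S g_1$, $g_1 \prec_S g_0$, or $g_0 = g_1$ holds; the latter two would yield $h(g_0) \prec_T h(g_0)$ by weak-homomorphism and transitivity, contradicting irreflexivity of $\prec_T$. Together these substeps show that $h$ is an embedding, and an analogous (easier) argument handles homomorphisms.

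With the rigidity claim in hand, the first assertion of the theorem follows from Theorems \ref{theomax} and \ref{theorcomplete}, since embedding, homomorphism and weak-homomorphism all coincide on the image of $T \mapsto G_T$. For the second assertion, I would replay the proof of Theorem \ref{theor1} verbatim, using the map $(x,p) \mapsto G_{S^x,p}$ built from the modified construction: this map is continuous, reduces $R'$ to homomorphism (resp.\ weak-homomorphism) on $Mod_\L$ because of rigidity, and remains injective by (the modified) Lemmas \ref{lemma1} and \ref{lemma2}; invoking Lopez-Escobar then yields the required $\L_{\omega_1\omega}$-sentence $\fhi$. The main obstacle is substep (ii) of the rigidity claim, but it is handled cleanly by the tree structure of $G_S$ and $G_T$ once injectivity is in place.
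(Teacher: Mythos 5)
Your proposal is correct and follows essentially the same route as the paper: pass to the strict (irreflexive) linear order so that weak-homomorphisms are forced to be injective, then use acyclicity and connectedness of the combinatorial tree to upgrade them to embeddings, and replay Theorem \ref{theor1} with Lemmas \ref{lemma1} and \ref{lemma2} intact. The only (harmless) divergence is that you treat the homomorphism case as a special instance of the strict-order rigidity argument, whereas the paper disposes of it directly on the original $G_{S,p}$, observing that reflexivity and antisymmetry of $\leq_T$ already force any homomorphism to be injective, so the same sentence $\fhi$ as in Theorem \ref{theor1} works there.
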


\begin{proof}
For the relation of homomorphism, one should simply note that since the
order of an (isomorphic copy of an) ordered combinatorial tree of the
form $G_T$ is reflexive and antisymmetric, each homomorphism
between $G_{S,p}$ and $G_{T,q}$ must be injective, i.e.\ homomorphisms and
embeddings coincide on $Mod_\fhi$ (where $G_{S,p}$, $G_{T,q}$ and
$Mod_\fhi$ are defined as in the proof of Theorem \ref{theor1}). 

For the relation of weak-homomorphism,
 first replace the order $\leq_T$ of $G_T$ with its strict part
 $<_T$ (denote such a structure by $\bar{G}_T$), where $G_T$ is
 defined as above. By 
 linearity of $\leq_T$, this implies that any weak-homomorphism $h$
 between $\bar{G}_S$ and $\bar{G}_T$  must be injective. But since the
  graph relation on any structure of the form $\bar{G}_T$ is connected, 
 $h$ must also be a homomorphism (hence an embedding): 
 for each pair of distinct
 elements $g,g' \in \bar{G}_S$ either $h(g) \nless_T h(g')$ or $h(g')
 \nless_T h(g)$, so if $h(g) <_T h(g')$ (which in particular implies
 $h(g) \neq h(g')$, and hence also $g \neq g'$) then $g <_S g'$ (otherwise by
 linearity of $\leq_S$ we would have $g' <_S g$ and hence $h(g') <_T
 h(g)$, a contradiction!). Moreover, let $g,g' \in \bar{G}_S$ be such
 that $h(g)$ and $h(g')$ are linked by the graph relation of
 $\bar{G}_T$, and let $g = g_0, g_1,  \dotsc, g_n=g'$ be the (unique)
 path in $\bar{G}_S$ which goes from $g$ to $g'$: since $h$ is a weak-homomorphism 
and is injective, $h(g_0), \dotsc, h(g_1)$ must be a
 path from $h(g)$ to $h(g')$ and if $n>1$ this would form a cycle
 because by hypothesis $h(g_0) = h(g)$ is linked to $h(g_n) =
 h(g')$ by the graph
 relation of $\bar{G}_T$, a contradiction with the absence of loops in
 $\bar{G}_T$! Hence
 $n=1$ and $g = g_0$ must be linked to $g' = g_1$ by the graph
 relation of $\bar{G}_S$.
Therefore, on (isomorphic copies of) structures of the form
$\bar{G}_S$, the notions of weak-homomorphism, homomorphism and
embedding coincide, and we can get the result by systematically
replacing  $G_T$ with  $\bar{G}_T$ in
Theorem
\ref{theorcomplete}, Lemma \ref{lemma1}, Lemma \ref{lemma2} and
Theorem \ref{theor1}.
\end{proof}

\begin{corollary}\label{cor10}
 Given a standard Borel space $X$, a binary relation $R$ on $X$ is 
an analytic quasi-order if and only if there is an
 $\mathcal{L}_{\omega_1 \omega}$-sentence $\fhi$  such that $R$ is
 Borel equivalent to the relation of embeddability (resp.\
 homomorphism, weak-homomorphism) on $Mod_\fhi$.
\end{corollary}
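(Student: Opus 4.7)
The plan is to split the equivalence into two directions. The forward direction (every analytic quasi-order $R$ on a standard Borel space arises in this form) is essentially restated by Theorems \ref{theor1} and \ref{theor1'}: each of these already produces an $\L_{\omega_1\omega}$-sentence $\fhi$ for which $R$ is Borel equivalent to embeddability (respectively homomorphism, weak-homomorphism) on $Mod_\fhi$. So for this direction nothing more is needed beyond invoking the appropriate theorem in each of the three cases, after reducing to $X={}^\omega 2$ as usual.

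For the backward direction, suppose $R$ is Borel equivalent to one of these morphism relations $F$ on $Mod_\fhi$. I would only use one half of this equivalence, namely a Borel reduction $f \colon X \to Mod_\fhi$ witnessing $R \leq_B F$. The task then decomposes into two observations: first, that $F$ is itself an analytic quasi-order on $Mod_\fhi$; second, that being an analytic quasi-order pulls back along $f$. For the first, each of embeddability, homomorphism, and weak-homomorphism is defined by existentially quantifying over a function $h \colon \omega \to \omega$ (injective in the embedding case) subject to Borel conditions linking the codes of the structures and $h$, so $F$ is analytic; reflexivity is witnessed by the identity on $\omega$, and transitivity by the composition of two such morphisms. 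For the second, $R=(f\times f)^{-1}(F)$ is analytic as the preimage of an analytic set under a Borel map, and the reflexivity and transitivity of $R$ follow directly from those of $F$ via the reduction equivalence $x R x' \iff f(x) F f(x')$.

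I do not expect a genuine obstacle here: the corollary is essentially a bookkeeping statement combining Theorems \ref{theor1} and \ref{theor1'} with the trivial fact that being an analytic quasi-order is inherited under Borel reducibility to an analytic quasi-order. The one point worth checking explicitly, since it is the most novel of the three notions, is that weak-homomorphism is indeed reflexive and transitive on $Mod_\fhi$: reflexivity is clear from the implications $g_0 P^G g_1 \imp g_0 P^G g_1$ and $g_0 Q^G g_1 \imp g_0 Q^G g_1$, and transitivity follows from composing two weak-homomorphisms, so the notion is genuinely a quasi-order. With this verified the backward direction goes through uniformly in all three cases.
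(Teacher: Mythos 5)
Your proposal is correct and follows exactly the paper's own argument: the forward direction is Theorems \ref{theor1} and \ref{theor1'}, and the backward direction is the observation that being an analytic quasi-order is downward closed under Borel reducibility (the paper leaves the verification that the target morphism relations are analytic quasi-orders implicit, which you spell out). No further comment is needed.
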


\begin{proof}
 One direction is given by Theorems \ref{theor1} and \ref{theor1'},
 while for the other direction just note that ``being an analytic
 quasi-order'' is downward closed with respect to Borel reducibility.
\end{proof}

\begin{corollary}\label{corequiv}
A binary relation $E$ on a standard Borel space $X$ is an analytic
 equivalence relation if and only if it is Borel equivalent to a
 bi-embeddability (resp.\ bi-homomorphism, bi-weak-homomorphism) relation.
\end{corollary}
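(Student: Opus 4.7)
The plan is to deduce this from Theorem~\ref{theor1} (resp.\ Theorem~\ref{theor1'}) by exploiting the fact that an analytic equivalence relation is just a symmetric analytic quasi-order, and that the Borel reductions between $E$ and $\sqsubseteq \restriction Mod_\fhi$ produced there automatically upgrade to reductions between $E$ and $\equiv \restriction Mod_\fhi$ when $E$ is symmetric.

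The easy direction is essentially automatic: if $E \sim_B \equiv \restriction Mod_\fhi$, then since $\equiv$ equals $\sqsubseteq \cap \sqsupseteq$ it is an analytic equivalence relation, and all four relevant properties (analyticity, reflexivity, symmetry, transitivity) transfer from target to source along a pair of Borel reductions, so $E$ is an analytic equivalence relation. The same remark covers the bi-homomorphism and bi-weak-homomorphism variants.

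For the substantive direction, given an analytic equivalence relation $E$ on $X$, I would view $E$ as an analytic quasi-order and invoke Theorem~\ref{theor1} to obtain an $\L_{\omega_1\omega}$-sentence $\fhi$ together with Borel reductions $f \colon X \to Mod_\fhi$ and $g \colon Mod_\fhi \to X$ witnessing $E \sim_B \sqsubseteq \restriction Mod_\fhi$. I expect that these \emph{same} maps already witness $E \sim_B \equiv \restriction Mod_\fhi$. For $f$: symmetry of $E$ forces
\[ f(x) \sqsubseteq f(y) \iff x E y \iff y E x \iff f(y) \sqsubseteq f(x),\]
so on the range of $f$ embeddability is symmetric and coincides with bi-embeddability, whence $xEy \iff f(x) \equiv f(y)$. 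For $g$: if $G \equiv H$ then in particular $G \sqsubseteq H$, so $g(G) E g(H)$; conversely, $g(G) E g(H)$ yields $g(H) E g(G)$ by symmetry of $E$, hence both $G \sqsubseteq H$ and $H \sqsubseteq G$, i.e., $G \equiv H$. The bi-homomorphism and bi-weak-homomorphism statements are obtained verbatim with Theorem~\ref{theor1'} in place of Theorem~\ref{theor1}. I do not anticipate any genuine obstacle here: all the real work was done in Theorem~\ref{theor1}, and the only subtle point is the simple observation that a reduction of a symmetric relation into a quasi-order must land in the ``symmetric part'' of that quasi-order.
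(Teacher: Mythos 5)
Your proposal is correct and follows the same route as the paper: the paper simply applies Corollary \ref{cor10} (equivalently, Theorems \ref{theor1} and \ref{theor1'}) to $E$ viewed as an analytic quasi-order, and the upgrade from $\sqsubseteq$ to $\equiv$ is exactly the observation you make, namely that a reduction of a symmetric relation into a quasi-order lands in its symmetric part. Your verification that the same maps $f$ and $g$ work, and your note that embeddability becomes symmetric on the range of $f$, match the remark the paper itself makes immediately after the corollary.
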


\begin{proof}
 For the nontrivial direction, apply Corollary \ref{cor10} to the quasi-order $E$. 
\end{proof}

As observed in the introduction, what we have really shown is that given an analytic equivalence
relation $E$ on a standard Borel space $X$
there is an $\L_{\omega_1 \omega}$-sentence $\fhi$ and a \emph{Borel}
function $f \colon X \to Mod_\fhi$ which witnesses that $E$
is \emph{Borel isomorphic} to $\equiv$ on $Mod_\fhi$ and that $=$ on $X$ is \emph{Borel isomorphic} to $\cong$ on $Mod_\fhi$. Moreover, if $X$
is homeomorphic to ${}^\omega 2$, by Remark \ref{remarkoc} and the
observation following Theorem \ref{theomax} we get that $f$ has the further nice property
of being a \emph{homeomorphism} on its range (and hence is a topological embedding, a very simple function).

\begin{remark}
\begin{enumerate}[a)]
 \item Note that in Corollary \ref{corequiv} we get (by the symmetry of $E$) that there is an $\L_{\omega_1 \omega}$-sentence $\fhi$ such that $E \sim_B {{\sqsubseteq} \restriction {Mod_\fhi}}$, but with the further property that embeddability and bi-embeddability coincide on $Mod_\fhi$, that is if $x,y \in Mod_\fhi$ are such that $x \sqsubseteq y$ then automatically $y \sqsubseteq x$ as well.

\item  Suppose that $E$ in Corollary \ref{corequiv} is $\cong$ on $Mod_\psi$, $\psi$ an $\L_{\omega_1 \omega}$-sentence of some countable language $\L$. It is worth nothing that in general the sentence $\fhi$ such that $(Mod_\psi, \cong) \sim_B (Mod_\fhi, \equiv)$  obtained by Corollary \ref{cor10} has in general very little in common with $\psi$, even if they share the same language. 
\end{enumerate}
\end{remark}

\subsection{Embeddings in Analysis}\label{sectionanalysis}

The following version of Theorem \ref{theor1} gives some applications
in analysis, but it is also interesting \emph{per se} as it shows that
one can replace the language $\L$ with two different binary symbols
with the graph language. 

\begin{theorem}\label{theor2}
 If $R$ is an analytic quasi-order on a standard Borel space $X$ then there is
 a sentence $\psi$ of $\mathcal{L}'_{\omega_1 \omega}$ (where
 $\mathcal{L}'$ is the graph language with just one binary relational
 symbol) such that 
 $R$ is Borel equivalent to embeddability on $Mod_\psi$ (in particular,
 $Mod_\psi$ is the collection of ``ordinary'' graphs which satisfy
 $\psi$).
\end{theorem}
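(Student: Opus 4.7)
The plan is to modify the construction of the ordered combinatorial tree $G_T$ from Theorem \ref{theorcomplete} so that its two binary relations (the combinatorial tree edge relation and the order $\leq_T$) are simultaneously encoded into a single symmetric edge relation, producing an ordinary graph $H_T$. Once this is achieved, the argument of Theorem \ref{theor1} transfers \emph{mutatis mutandis}, with $H_T$ in place of $G_T$.

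For the encoding, the natural idea is to use \emph{gadgets}. I would pick two small rigid finite graphs $\Gamma_P$ and $\Gamma_Q$, each with two distinguished attachment vertices, chosen so that $\Gamma_P$ is symmetric (its two attachment points can be exchanged by an automorphism) whereas $\Gamma_Q$ is asymmetric (no nontrivial automorphism exchanges its attachment points). The graph $H_T$ is then obtained from the vertex set $U_{G_T}$ by adjoining, for each combinatorial tree edge $\{g_1, g_2\}$ of $G_T$, a fresh copy of $\Gamma_P$ with attachment points identified with $g_1$ and $g_2$, and for each strict order pair $g_1 <_T g_2$, a fresh copy of $\Gamma_Q$ with its ordered attachment points matching the direction $g_1 \to g_2$. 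Equipping $\Gamma_P$ and $\Gamma_Q$ with mutually distinguishable ``markers'' (say pendants of degree $7$ and $11$, respectively) that do not occur elsewhere in $H_T$ guarantees that in any graph embedding $H_S \hookrightarrow H_T$ the $P$-gadgets are sent to $P$-gadgets and the $Q$-gadgets to $Q$-gadgets in the correct direction, so that its restriction to the ``core'' vertices $U_{G_S} \to U_{G_T}$ is an honest $OCT$-embedding of $G_S$ into $G_T$.

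Next, I would verify the analogs of the three key facts of Section \ref{sectionmodeltheory} with $H_T$ replacing $G_T$: (i) $S \leq_{max} T$ iff $H_S \sqsubseteq H_T$ (forward by canonically extending every $OCT$-embedding $G_S \hookrightarrow G_T$ to the full gadget-decorated structures, reverse by the recognizability of gadgets discussed above); (ii) $S \neq T \Rightarrow H_S \not\cong H_T$ (analog of Lemma \ref{lemma1}); (iii) for distinct $p, q \in S_\infty$, $H_{S, p} := j_{\L'}(p, H_S) \neq H_{S, q}$ (analog of Lemma \ref{lemma2}), where $\L' = \{E\}$ is the pure graph language. The remainder is a verbatim transcription of Theorem \ref{theor1}: the map $(x, p) \mapsto H_{S^x, p}$ is a Borel injective reduction of the auxiliary quasi-order $R'$ on ${}^\omega 2 \times S_\infty$ to graph-embeddability, its image is an isomorphism-invariant Borel subset of $Mod_{\L'}$, and hence by Lopez-Escobar equals $Mod_\psi$ for some $\L'_{\omega_1 \omega}$-sentence $\psi$.

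The main obstacle will be the clean choice of the gadgets $\Gamma_P$ and $\Gamma_Q$: they must be combinatorially rigid enough that (a) their copies are uniquely recognizable as subgraphs of $H_T$, (b) the asymmetry of $\Gamma_Q$ survives its interaction with the surrounding graph (in particular at attachment points, where several gadgets may meet), and (c) no spurious extra copies of $\Gamma_P$ or $\Gamma_Q$ arise from interactions among adjacent gadgets or with the underlying combinatorial tree. Several well-known constructions from the graph-theoretic toolkit (e.g.\ small asymmetric trees with carefully placed pendants) will do the job; once satisfactory gadgets have been fixed, the verification of (i)--(iii) reduces to routine combinatorial bookkeeping.
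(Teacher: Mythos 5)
Your overall strategy --- decorate the ordered combinatorial tree $G_T$ with finite gadgets so that both binary relations are absorbed into a single symmetric edge relation, and then rerun the proof of Theorem \ref{theor1} verbatim --- is genuinely different from what the paper does, and the step you defer to ``routine combinatorial bookkeeping'' is precisely where it is in danger of failing. The order $\leq_T$ is a \emph{linear} order on the countably infinite vertex set $U_{G_T}$, so your graph $H_T$ carries a $\Gamma_Q$-gadget between \emph{every} pair of core vertices. Consequently $H_T$ is very far from a combinatorial tree: it has bounded diameter, every core vertex meets infinitely many gadgets, and there is an enormous supply of short cycles. All of the recognizability arguments of \cite[Theorem 3.1]{louros} (and of the paper's own proof) rest on acyclicity: in a tree an embedding preserves distances exactly and the valence/distance fingerprints of the special vertices survive, whereas in your $H_T$ an embedding only gives $d(h(a),h(b))\leq d(a,b)$ and $\deg(h(a))\geq\deg(a)$, so a ``pendant of degree $7$'' in $H_S$ may land on a vertex of much larger (even infinite) degree, an internal gadget vertex may land on a core vertex, and an induced copy of $\Gamma_P$ with attachment points $h(g_1),h(g_2)$ may be assembled from fragments of several $Q$-gadgets meeting at core vertices. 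To close the argument you would have to exhibit concrete gadgets and prove that \emph{every} induced copy of $\Gamma_P$ (resp.\ $\Gamma_Q$) whose attachment points are core vertices of $H_T$ certifies a genuine tree edge (resp.\ a genuine order pair, with the correct orientation); nothing in the sketch does this, and it is not routine. A smaller point: for the analogue of Lemma \ref{lemma2} you need $H_S$ to be rigid, so the gadgets must in addition admit no nontrivial automorphism fixing both attachment points (your symmetric $\Gamma_P$ is harmless only because the swap cannot extend to the core, but internal symmetries would destroy the injectivity of $(x,p)\mapsto H_{S^x,p}$).

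The paper sidesteps all of this by \emph{not} encoding the order at all. It keeps the plain (unordered) combinatorial tree of Louveau--Rosendal and attaches to each $s\in\seqo$ new vertices $s^+$, $s^{++}$ together with $\#s+3$ infinite chains below $s^{++}$, so that $s^{++}$ is recognizable by its valence $\#s+4$; the resulting $G'_T$ is still a combinatorial tree, so the distance/valence analysis goes through and $S\neq T$ implies $G'_S\not\cong G'_T$. The price is that $G'_T$ is \emph{not} rigid: its automorphism group is one fixed closed subgroup $H\leq S_\infty$ (permuting the chains hanging from each $s^{++}$ and the two chains above each $(u,s,0^{2\theta(u)+2})$). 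This is repaired by taking a Borel transversal $Y$ for the left cosets of $H$ (Burgess's selector theorem) and running the Theorem \ref{theor1} argument on $X\times Y$ instead of $X\times S_\infty$. Remark \ref{remweakhomo}(b) does describe an alternative closer in spirit to yours --- make the trees rigid outright by truncating indistinguishable branches at different heights --- but even there the structures remain combinatorial trees, which is also what makes the later corollaries on ultrametric spaces, compacta and Banach spaces go through; your $H_T$ would not serve for those. If you want to pursue the gadget route you must either supply and verify the gadgets in full detail, or redesign the construction so that acyclicity (and with it the Louveau--Rosendal machinery) is preserved.
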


\begin{proof}
 The proof is a modification of the argument used in
 Theorem \ref{theor1}. Given a normal tree $T$ on $2 \times \omega$,
 we will define a new combinatorial tree\footnote{Clearly in this proof we could also use \emph{rooted} combinatorial trees instead of combinatorial trees: this will be used in Corollary \ref{corcompact}.} $G'_T$ (without any order),
 prove that \cite[Theorem 3.1]{louros} still holds when
 replacing $G_T$ with $G'_T$, and then slightly modify the
 argument used in Theorem \ref{theor1} to get the new result. 
 We can assume again that $X = {}^\omega 2$.
First define $G'_T$: given a normal tree $T \in \mathcal{T}$, let
$G_T$ be defined as in the proof of 
 \cite[Theorem 3.1]{louros}, that is as in the proof of Theorem
\ref{theor1} but without the order relation. Let $\# \colon \seqo \to
\omega$ be any 
bijection. Now for every $s 
\in \seqo$, adjoin vertices $s^+$, $s^{++}$ and $(s^{++}, i^k)$
for $ i \leq \#s +2$ and $0 \neq k \in \omega$, and link $s^+$ to both
$s$ and $s^{++}$, 
$(s^{++},i)$ to $s^{++}$, and $(s^{++},i^k)$ to
$(s^{++},i^{k+1})$. This concludes the definition of $G'_T$.

Now it is
easy to see how to reprove \cite[Theorem 3.1]{louros}. For one
direction, if $f$ is an injective witness of $S \leq_{max} T$ such that $\# s \leq \# f(s)$ for every $s \in \seqo$ (the construction of such a witness from an arbitrary one is easy and is left to the reader),
then we can define the embedding $g$ from $G'_S$ to $G'_T$ by sending $s$ to
$f(s)$, $s^*$ to $f(s)^*$, $s^+$ to $f(s)^+$, $s^{++}$ to $f(s)^{++}$,
$(s^{++},i^k)$ to $(f(s)^{++}, i^k)$, and $(u,s,x)$ to
$(u,f(s),x)$. For the other direction, note that all the points in
$G'_S$ have valence $\leq 2$ except for those of the form $s \in
\seqo$ (which have valence $\omega$), $s^{++}$ (which have valence
$\#s +4$), and $(u,s,0^{2 \theta(u)+2})$ (which have valence
$3$). Moreover, the distance from $s$ to $s^{++}$ is always $2$ and
vertices of the form $(u,s,0^{2 \theta(u)+2})$ are the unique vertices which
have valence $\geq 3$ and distance $2 \theta(u)+3$ from
$s$. Using all these facts, together with those about distances and
valences in $G_0$, we reach the conclusion that if $g$ is an embedding
of $G'_S$ in $G'_T$, then $f = g \restriction \seqo$ is such that
${\rm range}(f) \subseteq \seqo$, $f(\emptyset) = \emptyset$, and $f$
witnesses $S \leq_{max} T$ (the proof being exactly the same as in
\cite[Theorem 3.1]{louros}).

As before, each structure of the kind $G'_T$ needs to be 
Borel-in-$T$ coded into a structure $\hat{G}'_T$ with domain $\omega$ to fit
the official definition of countable $\L'$-structure: this can be done in
several ways, but in our case we need 
to specify a particular coding, at least for those $G'_T$ coming from an
infinite $T \in \mathcal{T}$. Let $\langle \cdot , \cdot \rangle$ be
any bijection from $\omega \times \omega$ to $\omega$. Then code 
vertices of the form $s$, $s^*$, $s^+$ and $s^{++}$ by $\langle 0, \#
s \rangle$, $\langle 1, \# s \rangle$, $\langle 2, \# s \rangle$ and
$\langle 3, \# s \rangle$, respectively. Now let $\eta_T$ be an
enumeration of $T$ such that $\eta_T((u,s)) \leq \eta_T ((v,t))$ if
and only if either $\# s < \# t$, or else $\# s = \# t$ and $u \leq_{lex} v$
($\eta_T$ is well-defined as in the latter case $s = t$, and $(u,s),(v,t)
\in T$ implies 
$|u| = |s| = |t| = |v|$), and code each vertex of the form $(u,s,x) \in
G'_T$, where $x 
\subseteq 0^{2 \theta(u) + 2}$, in an element of the form $\langle 4,
n \rangle$ in such a way that for two such vertices $(u,s,x)$ and
$(v,t,y)$ and corresponding codings $\langle 4,n \rangle$ and $\langle
4, m \rangle$ one has $n \leq m$ if and only if either $\eta_T((u,s))
< \eta_T((v,t))$ or $\eta_T((u,s)) = \eta_T((v,t))$ and $x \subseteq y$.
Finally, let 
$\eta_0$ be an
enumeration of $\langle (s,i) \mid {s \in \seqo} \wedge {i \leq \# s + 2} \rangle$ such that
$\eta_0((s,i)) \leq \eta_0((t,j))$ if and only if either $\# s < \# t$,
or else $\# s = \# t$ and $i \leq j$, and define $\pi_0 \colon \omega \to
\seqo$ and $\pi_1 \colon \omega \to \omega$ in such a way that
$\eta_0(\pi_0(n), \pi_1(n)) = n$. Now for $k \in \omega$ code vertices
of $G'_T$  of the form 
$(s^{++}, i^{k+1})$, $(u,s,0^{2 \theta(u) +2} {}^\smallfrown 0
  {}^\smallfrown 0^k)$ and $(u,s,0^{2 \theta(u) +2} {}^\smallfrown 1
  {}^\smallfrown 0^k)$  into $\langle 3
  \eta_0((s,i)) + 5, k \rangle$, $\langle 3
  \eta_T((u,s)) +6, k \rangle$ and $\langle 3
  \eta_T((u,s)) + 7, k \rangle$, respectively. This finishes the
  coding, and we will always identify $G'_T$ with its coded version
  $\hat{G}'_T$.

   Now consider the closed subgroup $H \subseteq S_\infty$ given by those
   bijections $p$ such that $p(\langle n,k \rangle) = \langle m,k
   \rangle$,  where $m$ depends only on $n$ (that is, if $p(\langle n , 0 \rangle) = \langle m, k \rangle$ then $p(\langle n,k \rangle) = \langle m,k \rangle$ for every $k \in \omega$) and the following conditions hold:
\begin{itemize}
 \item  $n = m$ if $n \leq 4$
\item $m = 3
   \eta_0((\pi_0(j),l))+5$ if $n = 3j+5$
\item   $m = n$ or $m = n+1$ if $n
   = 3j+6$
\item $m = n$ or $m = n-1$ if $n = 3j+7$.
\end{itemize}
 Notice that $H$ consists
   exactly of all automorphisms of $\hat{G}'_T = G'_T$ for some/every
   infinite $T \in \mathcal{T}$.  
    By a theorem of Burgess (see
   e.g.\  \cite[Theorem 12.17]{kechris}), there is a Borel selector for
   the equivalence relation on $S_\infty$ whose classes are the (left)
   cosets of $H$. Let $Y$ be the corresponding Borel transversal, and
   consider the quasi-order $R'$ defined on $X \times Y$ by letting
   $(x,p) R' (y,q) \iff x R y$. Let $f \colon X \times Y \to CT$ be the
   Borel (in fact, continuous) function which sends $(x,p)$ to $G'_{S^x,p} =
   j_{\L'}(p,G'_{S^x})$, where $S^x$ is defined as in 
   Section \ref{sectionpreliminaries}: it is immediate to check as before that $f$ is a reduction of
   $R'$ to $\sqsubseteq_{CT}$, so it is enough to show that $f$ is
   injective and that its range is invariant under isomorphism (this
   allows to conclude our proof as in Theorem \ref{theor1}).  

 The first claim (injectivity of $f$) follows from the fact that each
 isomorphism $i$ 
 between combinatorial trees of the form $G'_S$ and $G'_T$ (for $S,T \in
 \mathcal{T}$ infinite, as is the case if they are of the form
 $S^x$ because of the reflexivity of $R$) must belong to $H$:
 granting this, one should simply 
 note that $f((x,p)) = f((y,q))$ implies that $q^{-1} \circ p \in H$, so
 that $p=q$ (as they belongs to the same left coset of $H$ and both are in $Y$) and hence
 $G_{S^x} = G_{S^y}$, which in turn implies $x=y$ by injectivity of the
 map $x \mapsto S^x$. To prove the above statement, first note that
 since $i$ must preserve both distances and valences,  $i(s^{++}) =
 s^{++}$ because $s^{++}$ is the unique vertex (both
 in $G'_S$ and $G'_T$) with valence $\# s+4$, and therefore $i(s)=s$
 because $s$ is the unique vertex of valence $\omega$ with distance $2$
 from $s^{++}$. As in the proof of
 Lemma  \ref{lemma1}, this implies that $S = T$ (as $G_T$ and $G'_T$
 share the same vertices of valence $3$), and hence
 $G'_S = G'_T$: but this means $i \in H$ as required.

Finally, for the second claim (invariance of ${\rm range}(f)$) it
suffices to show that ${\rm 
  range}(f)$ is the saturation under isomorphism of the set $\{
G'_{S^x} \mid x \in X \}$, so consider a structure of the form
$j_{\L'}(p,G'_{S^x})$ for $x \in X$ and $p \in S_\infty$ (the other
inclusion is obvious). Let $q \in
Y$ be in the same (left) coset of $p$ with respect to $H$, so that $q
= p \circ h$ for some $h \in H$: then 
\[ j_{\L'}(p,G'_{S^x}) = j_{\L'}(q \circ h^{-1}, G'_{S^x}) =
j_{\L'}(q,j_{\L'}(h^{-1},G'_{S^x})) = j_{\L'}(q, G'_{S^x}) = f((x,q)),\]
since $h^{-1} \in H$ is necessarily an automorphism of $G'_{s^x}$.
\end{proof}

Clearly Theorem \ref{theor2} implies Theorem
\ref{theor1} as each combinatorial tree can be identified with the
ordered combinatorial tree with same graph relation and empty order,
and this identification preserves (closure under) isomorphisms and
embeddings. However, it seems that Theorem \ref{theor2} is really much
stronger than Theorem \ref{theor1} --- see the discussion in Section
\ref{sectionvaught}. 

The construction above allows us also to show that the relation
of homomorphism on graphs is a complete analytic quasi-order (a fact
already noted in  \cite[Theorem 3.5]{louros}), and that 
for each analytic quasi-order $R$ on $X$ there is an
$\mathcal{L'}_{\omega_1\omega}$-sentence $\psi$ such that $R$ is
Borel equivalent to the relation of homomorphism on $Mod_\psi$. This
follows from the next proposition and the fact the none of the
combinatorial trees
involved in the proof of Theorem \ref{theor2} have vertices of
valence $1$.

 \begin{proposition}\label{prophomo}
 Assume that $G$ is a combinatorial tree such that in $G$ there is no pair
 of vertices of valence $1$  with distance $2$ from each other, and
 $G'$ is an arbitrary graph. Then 
 every homomorphism from $G$ to $G'$ is injective (hence an embedding).
 \end{proposition}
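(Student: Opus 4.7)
The plan is to argue by contradiction: assuming that some homomorphism $h \colon G \to G'$ identifies two distinct vertices $g \neq g'$, I will produce a pair of valence-$1$ vertices of $G$ at distance exactly $2$, contradicting the hypothesis. The whole argument hinges on the fact that the paper's notion of homomorphism both preserves \emph{and} reflects the edge relation, so the equality $h(g) = h(g')$ forces $g$ and $g'$ to have the same neighborhood in $G$: for every $w \in G$,
\[ w E^G g \iff h(w) E^{G'} h(g) = h(g') \iff w E^G g'. \]

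From this key observation the rest would proceed in a few short steps. First I would rule out that $g$ and $g'$ are adjacent in $G$: if they were, then $h(g) E^{G'} h(g') = h(g)$ would produce a loop at $h(g)$ in $G'$, but $g$ is not self-adjacent in the combinatorial tree $G$, so by reflection neither is $h(g)$ in $G'$, a contradiction. Hence $d_G(g,g') \geq 2$. Next I would use connectedness of $G$ to exhibit a common neighbor: since $G$ is connected and contains at least the two vertices $g$ and $g'$, the vertex $g$ admits some neighbor $v$, and by the identity of neighborhoods $v$ is also a neighbor of $g'$; the path $g, v, g'$ then witnesses $d_G(g,g') = 2$. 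Finally I would show both $g$ and $g'$ have valence $1$: if $g$ had a second neighbor $w \neq v$, then $w$ would likewise be a neighbor of $g'$, yielding the $4$-cycle $g, v, g', w, g$ in $G$ and contradicting acyclicity. Symmetry gives the same conclusion for $g'$, and the desired pair of valence-$1$ vertices at distance $2$ is obtained.

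I do not expect any substantial obstacle. The only subtlety is the recognition that ``homomorphism'' here is the strong variety preserving and reflecting edges, which is what makes $h(g) = h(g')$ force coinciding neighborhoods; once that is noted, the definitional properties of combinatorial trees (connectedness, acyclicity, absence of loops) close the argument essentially mechanically.
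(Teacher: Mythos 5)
Your proof is correct. The one point you rightly flag as essential --- that the paper's homomorphisms satisfy the biconditional $g_0 E^G g_1 \iff h(g_0) E^{G'} h(g_1)$, so that $h(g)=h(g')$ forces $g$ and $g'$ to have identical neighborhoods --- is exactly the hypothesis that makes the statement true, and your three steps (no adjacency via the loop/reflection argument, distance exactly $2$ via a common neighbor, valence $1$ via the forbidden $4$-cycle) are all sound. Two micro-observations you leave implicit but which follow instantly from your first step: the common neighbor $v$ and the hypothetical second neighbor $w$ are automatically distinct from $g'$, since $g$ and $g'$ are not adjacent.

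Your route differs in organization from the paper's. The paper isolates a claim, proved by induction on $n\geq 3$, that $h$ is injective on every chain $g_0,\dotsc,g_n$ of $n+1$ vertices; injectivity of $h$ on $G$ minus the exceptional configuration then falls out because any two identified vertices would otherwise lie on such a chain. You instead extract the single observation that identified vertices have equal neighborhoods and read off the conclusion directly, with no induction. The underlying ingredients are identical in both arguments --- edge reflection plus acyclicity --- but your decomposition is shorter and arguably more transparent; the paper's chain lemma has the mild advantage of packaging the acyclicity uses into one reusable statement. Either proof is acceptable.
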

 
 \begin{proof}
   It suffices to prove that if $h \colon G \to G'$ is a homomorphism
   and $g_0,g_1$ are distinct vertices of $G$ such that $h(g_0) =
   h(g_1)$ then these 
   vertices have both valence $1$ and distance $2$ from each
   other. This is an easy 
   consequence of the next claim.

   \begin{claim}\label{claimepi}
     Let $g_0, \dotsc , g_n$ be a chain of vertices in $G$ with $n \geq
     3$. Then $h(g_i) \neq h(g_j)$ for distinct $i,j \leq n$.
   \end{claim}
 
   \begin{proof}[Proof of the claim]
 By induction on $n \geq 3$. If $n=3$ first we have that $h(g_i) \neq
 h(g_{i+1})$ ($i \leq 2$) because $g_i$ is linked to $g_{i+1}$ in
 $G$. Then $h(g_0) \neq h(g_2)$ because otherwise $h(g_3)$ would be
 linked to $h(g_0)$ and therefore $g_3$ would be linked to $g_0$,
 a contradiction with the acyclicity of $G$. The same argument (using
 $g_1$ instead of $g_0$ in the second case) shows that $h(g_1) \neq h(g_3)$ and
 $h(g_0) \neq h(g_3)$.
 
 For the inductive step, 
 consider a chain $g_0, \dotsc, g_{n+1}$: since the claim must hold for
 both the chains $g_0, \dotsc, g_n$ and $g_1, \dotsc, g_{n+1}$, we need
 only to check that $h(g_0) \neq h(g_{n+1})$. But 
 $h(g_0) = h(g_{n+1})$ would contradict the acyclicity of $G$ again
 (since it implies that $g_1$ is linked to $g_{n+1}$), 
 hence we are done.
 \renewcommand{\qedsymbol}{$\square$ \textit{Claim}}
   \end{proof}
 
 \end{proof}

\begin{remark}\label{remweakhomo}
\begin{enumerate}[a)]
 \item Despite the previous result, we should note that the construction
given in Theorem \ref{theor2}  cannot be used to prove the 
analogous statement about the relation of weak-homomorphism on
combinatorial trees, as any two such trees are always
bi-weak-homomorphic. To see this it is enough to show that any
combinatorial tree $G$ is indeed bi-weak-homomorphic to the
combinatorial tree $\bar{G}$ on $\omega$ in which $n<m$ are linked
just in case $m=n+1$. In fact, choose any vertex $g_0$ of $G$: the map
which sends an arbitrary vertex of $G$ to its distance from $g_0$ is a
weak-homomorphism of $G$ into $\bar{G}$. Conversely, choose vertices
$g_0, g_1$ in $G$ such that there is an edge between them: the map
which sends $2k+i$ to $g_i$ (for $i=0,1$ and $k \in \omega$) is a
weak-homomorphism of $\bar{G}$ into $G$. 

\item A different proof of Theorem \ref{theor2} can be given using an argument similar to the one of Theorem \ref{theor1}: in fact it is possible to define for each normal tree $T$ on $2 \times \omega$ a combinatorial tree $G^+_T$ such that $S \neq T \imp G^+_S \not\cong G^+_T$ and each $G^+_T$ has no nontrivial automorphism, and then prove the desired results as in the proof of Theorem \ref{theor1} but using these last properties instead of Lemmas \ref{lemma1} and \ref{lemma2}. This alternative proof allows one to get also an effective version of Theorem \ref{theor2} analogous to the effective version of Theorem \ref{theor1} provided in Remark \ref{remeffective}, and is in a sense simpler than the one we gave above. However, we get the rigidity of $G^+_T$ by truncating at different heights those branches (i.e.\ maximal paths which start from $\emptyset$) which are not distinguishable in terms of their distance from all other branches of the tree (where``distance'' refers here to the distance $d_T$ defined in the proof of Corollary \ref{corultrametric}), that is e.g.\ the branches determined by the nodes $(s^{++},i)$ for different $i$'s. Such distinction would obviously get lost when passing to the corresponding ultrametric Polish space $U_T$ as defined in Corollary \ref{corultrametric}: this would result in a complication of the proof of Corollary \ref{corultrametric}, as we then should prove that any isometry between $U_S$ and $U_T$ can still be converted in an isomorphism between $G^+_S$ and $G^+_T$ (it is no more true that any isometry between $U_S$ and $U_T$ ``directly'' induces the unique isomorphism between $G^+_S$ and $G^+_T$, as it can now mix branches which are truncated at different heights). Moreover, the proof of Theorem \ref{theor2} we chose has the further advantage of introducing in a simple way a technique which will be used (in a more complicated way) in the proof Theorem \ref{theorVCcomplete}.

\item Combining the variant suggested above with \cite[Theorem 3.3]{louros}, one gets that the combinatorial trees used  in Theorem \ref{theor2} can be substituted by countable partial orders or countable lattices (viewed as partial orders, or even viewed as countable lattices, that is as structures in the language $\L''$ containing two binary function symbols and satisfying the axioms of lattices). In fact, in \cite[Theorem 3.3]{louros} a map $G \mapsto \leq_G$ from combinatorial trees on $\omega$ to countable partial orders (or to lattices) is constructed, and inspecting its proof one gets that each nontrivial (that is, different from the identity) isomorphism between $\leq_G$ and $\leq_H$ can be turned into a nontrivial isomorphism between $G$ and $H$: therefore we get that the map $T \mapsto \leq_{G^+_T}$ is such that $S \neq T \imp \leq_{G^+_S} \not\cong \leq_{G^+_T}$ and $\leq_{G^+_T}$ has only trivial automorphisms, hence we can conlude the proof as in Theorem \ref{theor1} again.
\end{enumerate}
\end{remark}

The main advantage of using Theorem \ref{theor2} is that we can get
several applications in analysis as corollaries  
(we could not have used Theorem \ref{theor1} because of
the orderings). First consider the class $\mathscr{D}$ of
\emph{discrete Polish metric spaces} $(\mathcal{X},d)$ (i.e.\ discrete
separable complete metric spaces): since any discrete separable
topological space is countable, we can identify each of them as a
space on $\omega$, i.e.\ we can put $\mathcal{X} = \omega$. Granting
this identification we have the following 
result (recall that an isometric
embedding is simply an injection between metric spaces which preserves
distances, while an isometry is just a surjective isometric embedding). 

\begin{corollary}\label{cordiscrete}
 If $R$ is an analytic quasi-order on a standard Borel space $X$
 then there is a Borel class $\mathcal{C} \subseteq \mathscr{D}$ closed
 under isometry such that $R$ is 
  Borel equivalent to the relation of isometric embeddability on
 $\mathcal{C}$.
\end{corollary}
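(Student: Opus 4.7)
The plan is to reduce to Theorem \ref{theor2} by encoding each combinatorial tree as a discrete metric that faithfully captures the edge relation. First I will apply Theorem \ref{theor2} to $R$ to obtain an $\mathcal{L}'_{\omega_1\omega}$-sentence $\psi$ and a Borel reduction $f \colon X \to Mod_\psi$ of $R$ to embeddability on $Mod_\psi$; as arranged in the proof of Theorem \ref{theor2}, $f$ will moreover be injective with Borel inverse on its range.

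For each graph $G$ on $\omega$ I will define $d_G(n,n) = 0$, $d_G(n,m) = 1$ when $n \neq m$ are adjacent in $G$, and $d_G(n,m) = 2$ otherwise. A routine case check shows $d_G$ is a metric, and since distinct points are at distance $\geq 1$ the space $U_G = (\omega, d_G)$ is uniformly discrete, complete and separable, hence $U_G \in \mathscr{D}$. The key observation is that a function $h \colon \omega \to \omega$ is an isometric embedding $U_G \to U_H$ if and only if it is a graph embedding $G \to H$: the value set $\{0,1,2\}$ forces such an $h$ to be injective and to preserve and reflect both edges and non-edges. In particular $U_G$ is isometric to $U_H$ iff $G \cong H$.

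I then define $\mathcal{C} = \{U_G : G \in Mod_\psi\}$. Any metric on $\omega$ isometric to some $U_G$ has values in $\{0,1,2\}$, so it is itself of the form $U_H$ for the graph $H$ read off via $\{n,m\} \in H \iff d(n,m)=1$; the witnessing isometry becomes an isomorphism $G \cong H$, and hence $H \in Mod_\psi$ by invariance of $Mod_\psi$ under isomorphism, proving $\mathcal{C}$ is closed under isometry. The map $G \mapsto U_G$ is a Borel bijection between $Mod_\psi$ and $\mathcal{C}$ with Borel inverse (recover the edge relation from the distance-$1$ pairs), so $\mathcal{C}$ is Borel in $\mathscr{D}$. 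Composing $f$ with $G \mapsto U_G$ yields a Borel reduction of $R$ to isometric embeddability on $\mathcal{C}$, and composing in the reverse order (using the Borel inverse of $f$ on its range) yields the other direction, giving the desired Borel equivalence.

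The only real obstacle is bookkeeping: checking that $d_G$ is a metric with the claimed discrete/complete/separable topology, verifying the tight translation between iso-embeddings of the $U_G$'s and graph embeddings of the $G$'s, and confirming that $\mathcal{C}$ is Borel and isometry-closed. There is no new conceptual content beyond the $\{0,1,2\}$-valued metric encoding, which is the natural way to transport graph theory into the isometric category.
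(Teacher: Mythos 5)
Your proof is correct and follows essentially the same route as the paper: apply Theorem \ref{theor2} and transport the resulting graphs into $\mathscr{D}$ via a metric from which the edge relation can be recovered, so that isometric embeddings coincide with graph embeddings and isometries with isomorphisms (whence the isometry-closure of $\mathcal{C}$ follows from the isomorphism-invariance of $Mod_\psi$). The only difference is the choice of metric: the paper uses the geodesic distance on the combinatorial trees $G'_{S,p}$ (which needs connectivity and acyclicity to make graph embeddings distance-preserving), whereas your truncated $\{0,1,2\}$-valued metric works for arbitrary graphs; both are equally valid here.
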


\begin{proof}
With the notation established in the proof of Theorem \ref{theor2},
consider  each graph of the form $G'_{S,p}$ as a discrete Polish 
 space, where the distance is the geodesic distance on
 $G'_{S,p}$. Since one can recover such a distance from the graph
 structure and, conversely, the graph structure from the distance, it
 is clear that embeddings must correspond exactly to isometric
 embeddings and isomorphisms to isometries, therefore the result
 follows immediately from Theorem \ref{theor2}.
 \end{proof}

If one wants to deal with uncountable Polish spaces, the standard
procedure is to identify (up to isometry) each such space with a
closed subset of the Polish Urysohn space $\mathbb{U}$ (where the class
$F(\mathbb{U})$ of closed subsets of $\mathbb{U}$ is endowed with the
Effros-Borel topology), and then consider the analytic relation of
isometric embeddability on $F(\mathbb{U})$. If we now look at the Borel
class $\mathscr{U} \subseteq F(\mathbb{U})$ of ultrametric  Polish
spaces (i.e.\ metric Polish spaces whose distance $d$ is such that $d(x,y) \leq \max \{ d(x,z),d(z,y) \}$) we get the following: 

\begin{corollary}\label{corultrametric}
 For every analytic quasi-order $R$ on a standard Borel space $X$
 there is a Borel class $\mathcal{C} \subseteq \mathscr{U}$ of
 pairwise non-isometric ultrametric Polish spaces such that $R$ is
 Borel equivalent to the relation of isometric embeddability on
 $\mathcal{C}$. 
\end{corollary}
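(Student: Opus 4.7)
The plan is to convert each (rooted) combinatorial tree $G'_T$ from the proof of Theorem \ref{theor2} into an ultrametric Polish space $U_T$, and then transfer the entire argument of Theorem \ref{theor2} through this correspondence. I would work with the ``rooted'' variant of the construction of Theorem \ref{theor2}, taking $\emptyset$ as root, and define $U_T$ to be the set of rays (one-way infinite simple paths starting at the root) of $G'_T$ endowed with the distance $d_T(\beta,\beta') = 2^{-n}$, where $n$ is the graph-distance from $\emptyset$ to the last common vertex of $\beta$ and $\beta'$ (and $d_T(\beta,\beta)=0$). This is easily seen to be a complete separable ultrametric, so $U_T$ gives a well-defined element of $\mathscr{U}$ after fixing an isometric copy inside $\mathbb{U}$.

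The key technical point is to show that the map $T \mapsto U_T$ (restricted to the normal trees occurring in the proof of Theorem \ref{theor2}) is a reduction of embeddability on the corresponding rooted combinatorial trees to isometric embeddability. One direction is routine: any embedding $j \colon G'_S \hookrightarrow G'_T$ must fix the root $\emptyset$ (this is established in the proof of Theorem \ref{theor2} by a valence/distance argument), so it preserves graph-distances from the root and hence induces an isometric embedding $U_S \to U_T$ by sending each ray to its image, uniquely extended to a ray of $G'_T$.

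The main obstacle is the converse: showing that any isometric embedding $\iota \colon U_S \to U_T$ arises from an embedding of rooted combinatorial trees $G'_S \hookrightarrow G'_T$. The heart of the matter is that $G'_T$ is ``pruned'' (every vertex lies on some ray to infinity) and its branching pattern at each height is encoded canonically in the $T$-dependent pendants $s^+$, $s^{++}$, $(s^{++},i^k)$ and $(u,s,0^{2\theta(u)+2} {}^\smallfrown j {}^\smallfrown 0^k)$, which all extend to infinity and so are ``visible'' to $\iota$. From $d_T$ one recovers the graph-distances from $\emptyset$ to the meet points of pairs of rays, then the full vertex set of $G'_T$ as the union of meets and the ``unary'' vertices sitting between consecutive meets, and finally the edge relation of $G'_T$ itself; the key observation is that any isometric embedding must respect this reconstruction, and so descends to a rooted-tree embedding $G'_S \hookrightarrow G'_T$. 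This is where the careful arrangement of pendant-heights in Theorem \ref{theor2} pays off: as flagged in Remark \ref{remweakhomo}(b), it prevents the ``mixing of pendants'' that would otherwise occur, ensuring that different rays of $G'_T$ are isometrically distinguishable in a canonical way.

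Once this correspondence is in place, the rest mirrors the proof of Theorem \ref{theor2} verbatim. Let $(x,p) \mapsto U_{S^x,p}$, where $p$ ranges over the Borel transversal $Y$ for the cosets of the automorphism group of $G'_{S^x}$ used in Theorem \ref{theor2}: this map is Borel into $\mathscr{U}$, reduces the quasi-order $R'$ (hence $R$) to isometric embeddability, and has as range an isometry-invariant Borel subset $\mathcal{C} \subseteq \mathscr{U}$ of pairwise non-isometric ultrametric Polish spaces (the pairwise non-isometry following from the rigidity established above combined with Lemmas \ref{lemma1} and \ref{lemma2}). Hence $R$ is Borel equivalent to isometric embeddability on $\mathcal{C}$, as required.
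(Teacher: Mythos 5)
Your central idea --- turning $G'_T$ into the ultrametric space of its infinite paths from the root, with $d_T(\alpha,\beta)=2^{-n}$ measured at the last common vertex, and converting embeddings of rooted combinatorial trees into isometric embeddings and back --- is exactly the paper's construction, and your sketch of the reconstruction of $G'_T$ from $(U_T,d_T)$ (vertices as balls, edges as containment of balls of consecutive radii, using that every vertex of $G'_T$ lies on an infinite path) is a correct, if more detailed, account of the step the paper leaves implicit.

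The final assembly, however, contains a genuine error. You feed into the ultrametric construction the map $(x,p)\mapsto U_{S^x,p}$ with $p$ ranging over the Borel transversal $Y$ from Theorem \ref{theor2}, and claim the range is simultaneously ``isometry-invariant'' and ``pairwise non-isometric.'' These two properties are incompatible for a class with more than one isometry type realized more than once, and in fact \emph{both} fail here. For distinct $p,q\in Y$ the structures $G'_{S^x,p}$ and $G'_{S^x,q}$ are distinct \emph{codes} of isomorphic graphs, so $U_{S^x,p}$ and $U_{S^x,q}$ are isometric: your class $\mathcal{C}$ is therefore not pairwise non-isometric, which is precisely the conclusion the corollary demands. (Your appeal to Lemmas \ref{lemma1} and \ref{lemma2} cannot repair this: Lemma \ref{lemma2} distinguishes $G_{S,p}$ from $G_{S,q}$ only as distinct codes, using the \emph{order} relation that is absent from $G'_T$, and distinctness of codes is invisible to isometry of the associated metric spaces.) As for isometry-invariance of $\mathcal{C}$ inside $F(\mathbb{U})$: it does not follow from anything in Theorem \ref{theor2} (whose invariance is under the $S_\infty$-action on codes, needed only for Lopez--Escobar), and the paper explicitly records in Question \ref{questcor} that obtaining an isometry-closed such class is open. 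The fix is to \emph{drop} the transversal altogether: the corollary asks only for a Borel class, not an invariant one, so take $g(x)=U_{S^x}$. This $g$ is Borel and injective, hence ${\rm range}(g)$ is Borel; it reduces $R$ to isometric embeddability by Theorem \ref{theor2} plus your tree-to-ultrametric correspondence; and for $x\neq y$ one has $S^x\neq S^y$, hence $G'_{S^x}\not\cong G'_{S^y}$ by the rigidity argument of Theorem \ref{theor2}, hence $U_{S^x}$ and $U_{S^y}$ are non-isometric. Setting $\mathcal{C}={\rm range}(g)$ finishes the proof.
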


\begin{proof}
 Identify each element of the form $G'_T$ (for $T \in \mathcal{T}$)
 with the set $U_T$ of all maximal paths $\alpha$ starting from
 $\emptyset$, equipped with the distance $d_T$ defined by
 $d_T(\alpha_0, \alpha_1) = 0$ 
 if $\alpha_0 = \alpha_1$, and $d_T(\alpha_0, \alpha_1) = 2^{-n}$ if
 the set of vertices belonging to both $\alpha_0$ and $\alpha_1$ has
 cardinality $n$. It is clear that $U_T = (U_T, d_T)$ is an
 ultrametric Polish space, and that $G'_S \cong G'_T$ if and only if
 $U_S$ is isometric to $U_T$: this is because any isomorphism (resp.\
 embedding) between $G'_S$ and $G'_T$ can be canonically converted into
 an isometry (resp.\ isometric embedding) between $U_S$ and $U_T$, and
 \emph{vice-versa}. Now consider the Borel map $g$ which sends $x
 \in X$ to the isometric copy of $U_{S^x}$ in $\mathbb{U}$ (which is
 an element of $\mathscr{U}$): it is clearly injective, and by Theorem
 \ref{theor2} has the further 
 property that $x \neq y$ implies that $g(x)$ and $g(y)$ are not
 isometric; therefore it is enough to put $\mathcal{C} = {\rm
   range}(g)$. 
\end{proof}

Now we turn our attention to continuous embeddability. Each compact
metrizable space can be identified, up to homeomorphism, with an element
of the space $K(I)$, the space of all compact subspaces of the
Hilbert cube $I = [0,1]^\omega$ (with its Hausdorff topology). If we
consider the relations of, respectively, continuous embeddability
(given by injective continuous maps) and homeomorphism between
elements of $K(I)$, we get, respectively, an analytic quasi-order and
an analytic equivalence relation: also in this case, Theorem
\ref{theor2} leads to the following corollary\footnote{We have proved
  this result for the class $K([0,1]^2)$ of compact subsets of
  $[0,1]^2$, but since any such space can be naturally identified, up
  to homeomorphism, with an element of $K(I)$ the corollary holds with
  $K([0,1]^2)$ replaced by $K(I)$ as well.  Nevertheless, we cannot
  replace $K([0,1]^2)$ by $K([0,1])$ because, as already noted in
  \cite{louros}, the notion of continuous embeddability on $K([0,1])$
  gives just a pre-well-ordering of type $\omega_1 +2$.}. 

\begin{corollary}\label{corcompact}
 For every analytic quasi-order $R$ on a standard Borel space $X$
 there is a Borel class $\mathcal{C} \subseteq K([0,1]^2)$ of
 pairwise non-homeomorphic compact metrizable spaces such that $R$ is
 Borel equivalent to the relation of continuous embeddability on
 $\mathcal{C}$. 
\end{corollary}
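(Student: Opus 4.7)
The plan is to parallel Corollaries \ref{cordiscrete} and \ref{corultrametric}, but this time realize the rooted combinatorial trees $G'_T$ from Theorem \ref{theor2} (using the rooted version mentioned in the footnote of that theorem, with root $\emptyset$) as compact planar dendrites. For each normal tree $T \in \mathcal{T}$ I would construct $K_T \subseteq [0,1]^2$ geometrically: fix a position for the root, and for each vertex $v$ at combinatorial distance $n$ from the root draw the edge from $v$ to its parent as a straight line segment of length $\lambda_n$, where $(\lambda_n)$ is a summable sequence (say $\lambda_n=3^{-n}$), with angles at each branching chosen by some canonical Borel rule so that edges from different subtrees stay disjoint and everything fits in $[0,1]^2$. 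Taking the topological closure adds limit points corresponding to infinite branches, and the resulting $K_T$ is a compact dendrite. The map $T \mapsto K_T$ is then Borel (indeed continuous) into $K([0,1]^2)$ with the Hausdorff topology.

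The central claim to verify is the correspondence: $G'_S$ embeds into $G'_T$ as a combinatorial tree iff $K_S$ continuously embeds into $K_T$, and $G'_S \cong G'_T$ iff $K_S$ and $K_T$ are homeomorphic. The ``$\Rightarrow$'' direction of the first equivalence is routine: a graph embedding is realized by mapping vertices to the corresponding vertices and extending linearly along edges, which produces a continuous injection since all edge lengths decrease geometrically. The main obstacle is the converse — extracting a graph embedding from an arbitrary continuous injection $\iota\colon K_S \to K_T$. For this I would exploit the fact that $K_S$ and $K_T$ are dendrites (locally connected continua containing no simple closed curve), so that branch points, endpoints, and the local order at each point (the number of components of a small punctured neighborhood) are topological invariants preserved by continuous injections. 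The vertices of $G'_T$ are precisely the branch points and endpoints of $K_T$, and their local orders coincide with the graph-theoretic valences computed in the proof of Theorem \ref{theor2} (valence $\omega$ at the vertices $s \in \seqo$, valence $\#s+4$ at $s^{++}$, valence $3$ at the special $(u,s,0^{2\theta(u)+2})$, valence $2$ elsewhere on the edge interiors). Hence $\iota$ must send vertices to vertices and preserve adjacency, inducing a graph embedding $G'_S \hookrightarrow G'_T$; the analogous argument with a homeomorphism yields a graph isomorphism.

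Granting the correspondence, the corollary follows as in Corollary \ref{corultrametric}. Define $g\colon X \to K([0,1]^2)$ by $g(x) = K_{S^x}$. Then $g$ is Borel, and by the correspondence combined with Theorem \ref{theor2},
\[ xRy \iff G'_{S^x} \sqsubseteq G'_{S^y} \iff K_{S^x} \text{ embeds continuously in } K_{S^y}. \]
Moreover, if $x \neq y$ then $S^x \neq S^y$, whence (by the analogue of Lemma \ref{lemma1} for $G'_T$ already embedded in the proof of Theorem \ref{theor2}) $G'_{S^x} \not\cong G'_{S^y}$, and so $K_{S^x}$ and $K_{S^y}$ are not homeomorphic. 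In particular $g$ is injective with Borel inverse on its range, so $\mathcal{C} = \mathrm{range}(g) \subseteq K([0,1]^2)$ is a Borel class of pairwise non-homeomorphic compact metrizable spaces, and $g$ together with its inverse witnesses that $R$ is Borel equivalent to continuous embeddability on $\mathcal{C}$.
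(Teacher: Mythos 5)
Your overall architecture is the same as the paper's: realize the rooted combinatorial trees $G'_{S^x}$ as compact dendrites in $[0,1]^2$, establish that graph embeddability (resp.\ isomorphism) corresponds to continuous embeddability (resp.\ homeomorphism), and then use the rigidity $x \neq y \imp G'_{S^x} \not\cong G'_{S^y}$ from the proof of Theorem \ref{theor2} to get a pairwise non-homeomorphic range; your closing paragraph is exactly the paper's argument. The difference is that the paper does not reprove the correspondence: it cites \cite[Theorem 4.5]{louros} for the Borel map $G \mapsto K_G$ and for the equivalence between embeddability and continuous embeddability, and only adds the observation that isomorphism corresponds to homeomorphism.

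The gap is in your proof of the hard direction of that correspondence. It is not true that branch points, endpoints, and local order are ``preserved by continuous injections.'' A continuous injection $\iota \colon K_S \to K_T$ of a compact dendrite is a homeomorphism onto a subdendrite $D = \iota(K_S)$ of $K_T$, and order is relative to the ambient space: one has $\mathrm{ord}_D(\iota(p)) = \mathrm{ord}_{K_S}(p)$ but only the inequality $\mathrm{ord}_{K_T}(\iota(p)) \geq \mathrm{ord}_D(\iota(p))$. Concretely, an arc injects continuously into a triod so that its midpoint (order $2$) lands on the branch point (order $3$), and an endpoint of $K_S$ can land in the interior of an edge of $K_T$. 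Consequently: (a) non-vertices of $K_S$ can be sent to branch points of $K_T$; (b) a vertex of valence $k$ can be sent to a branch point of strictly larger valence; and (c) even when two adjacent vertices of $G'_S$ are sent to branch points of $K_T$, the arc joining their images may pass through additional branch points of $K_T$ that are not branch points of $D$, so adjacency need not be preserved and no graph embedding $G'_S \hookrightarrow G'_T$ is directly induced. The only order that is pinned down exactly is $\omega$ (no point of a dendrite has order $> \omega$), which controls the vertices $s \in \seqo$, but not the vertices $s^{++}$ of valence $\# s + 4$ nor the valence-$3$ vertices $(u,s,0^{2\theta(u)+2})$ on which the reconstruction of a witness to $S \leq_{max} T$ depends. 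Converting these one-sided inequalities into the conclusion that $\iota \restriction \seqo$ witnesses $S \leq_{max} T$ is precisely the delicate content of \cite[Theorem 4.5]{louros} (see also \cite{marconerosendal}); you should either cite that theorem, as the paper does, or carry out this analysis rather than appeal to a general invariance principle that is false.
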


\begin{proof}
 Consider the construction
 given in  \cite[Theorem 4.5]{louros} which defines a Borel map $S
 \mapsto K_S$ from rooted combinatorial trees to $K(I)$. Analyzing that proof, it is clear that for
 distinct rooted combinatorial trees $S,T$ one gets $S \cong T$ if and
 only if $K_S$ and $K_T$ are homeomorphic. Therefore the Borel map $g$
 which sends $x \in X$ to $K_{G'_{S^x}}$ is such that $g(x)$ is non-homeomorphic 
to $g(y)$ for distinct $x,y \in X$. Taking $\mathcal{C}
 = {\rm range}(g)$ we get the result. 
\end{proof}

Finally, we look at separable Banach spaces. Any such space is
linearly isometric to a closed subspace of $C([0,1])$ with
the sup norm, so the class of separable Banach spaces can be
identified with the Borel subset $\mathscr{B} \subseteq
F(C([0,1]))$ of all closed linear subspaces of
$C([0,1])$ (which is a standard Borel space). A function
between two separable Banach spaces $B$ and $B'$ is said to be a \emph{linear
  isometric embedding} (resp.\ \emph{linear isometry}) if it is linear
and norm-preserving (resp.\ linear, norm-preserving and onto): the
corresponding relations of 
linear isometric embeddability and linear isometry on $\mathscr{B}$
are, respectively, an analytic quasi-order and an analytic equivalence
relation. As noted in \cite{louros}, recent results by Godefroy and
Kalton show that on $\mathscr{B}$ these two relations coincide with
isometric embeddability and isometry, respectively. 

\begin{corollary}\label{corbanach}
 For every analytic quasi-order $R$ on a standard Borel space $X$
 there is a Borel class $\mathcal{C} \subseteq \mathscr{B}$ of
 pairwise non-linear isometric (resp.\ non-isometric) separable Banach
 spaces isomorphic to $c_0$ such that $R$ is Borel equivalent to the
 relation of linear isometric embeddability (resp.\ isometric
 embeddability) on $\mathcal{C}$. 
\end{corollary}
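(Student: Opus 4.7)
My plan is to follow the same template used in the previous corollaries of this section: apply Theorem \ref{theor2} (in its rooted variant, as in Corollary \ref{corcompact}) to reduce $R$ to embeddability on a Borel class of rooted combinatorial trees that are pairwise non-isomorphic, and then compose this reduction with the Borel map from rooted combinatorial trees to separable Banach spaces isomorphic to $c_0$ that Louveau and Rosendal constructed in \cite{louros}. That is, I would invoke from \cite{louros} a Borel map $G \mapsto B_G$ sending each rooted combinatorial tree $G$ to a closed linear subspace $B_G$ of $C([0,1])$ linearly isomorphic to $c_0$, with the property that $G \sqsubseteq H$ if and only if $B_G$ is linearly isometrically embeddable in $B_H$; in particular, $G \cong H$ iff $B_G$ and $B_H$ are linearly isometric.

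Concretely, given an analytic quasi-order $R$ on $X$, first use Theorem \ref{theor2} to obtain a Borel injection $x \mapsto G'_{S^x}$ which is a reduction of $R$ to $\sqsubseteq$ on rooted combinatorial trees and satisfies $x \neq y \imp G'_{S^x} \not\cong G'_{S^y}$. Define $g \colon X \to \mathscr{B}$ by $g(x) = B_{G'_{S^x}}$. Then $g$ is Borel and
\[ x R y \iff G'_{S^x} \sqsubseteq G'_{S^y} \iff g(x) \text{ linearly isometrically embeds into } g(y), \]
while the rigidity on the tree side together with the corresponding rigidity on the Banach side gives that the $g(x)$'s are pairwise non-linearly-isometric, so $g$ is injective. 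Setting $\mathcal{C} = {\rm range}(g)$, standard arguments (a Borel injection on a standard Borel space has a Borel image and Borel inverse) show that $\mathcal{C}$ is Borel and that $g$ together with $g^{-1}$ witnesses the Borel equivalence of $R$ with linear isometric embeddability on $\mathcal{C}$. For the isometric (non-linear) version, I would appeal to the Godefroy--Kalton theorem cited in the paragraph preceding the statement: on $\mathscr{B}$, linear isometric embeddability coincides with isometric embeddability (and linear isometry coincides with isometry), so exactly the same class $\mathcal{C}$ serves.

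The main obstacle I anticipate is extracting from \cite{louros} the two properties needed from $G \mapsto B_G$: (i) the full equivalence between tree embeddability and linear isometric embeddability of the associated Banach spaces, and (ii) the rigidity statement that non-isomorphic trees in the range of the construction yield non-linearly-isometric Banach spaces. Property (i) should be stated directly there; property (ii) may require inspecting the proof and, if it is not automatic, restricting the construction to the highly rigid rooted combinatorial trees $G'_{S^x}$ produced by Theorem \ref{theor2} and checking that their structural distinctness transfers to the Banach side (this is analogous to how Corollary \ref{corcompact} leverages the rigidity of the $G'_{S^x}$ through the map $S \mapsto K_S$).
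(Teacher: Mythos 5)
Your proposal matches the paper's proof essentially verbatim: the paper composes the map $x \mapsto G'_{S^x}$ from Theorem \ref{theor2} with the Louveau--Rosendal construction $G \mapsto (c_0, \|\ \|_G)$ of \cite[Theorem 4.6]{louros}, uses the stated equivalence ``$G_0 \cong G_1$ iff the spaces are linearly isometric'' to transfer the rigidity of the $G'_{S^x}$ to the Banach side, sets $\mathcal{C} = {\rm range}(g)$, and handles the non-linear isometric version via Godefroy--Kalton exactly as you describe. The only cosmetic difference is that rooting the trees is unnecessary here (it is only needed for the compact-space corollary).
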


\begin{proof}
 Given a combinatorial tree $G$, consider the construction of the
 space $(c_0, \|\ \|_G)$ given in the proof of \cite[Theorem 4.6]{louros}. Since that proof shows that $G_0 \cong G_1$ if and
 only if $(c_0, \|\ \|_{G_0})$ and $(c_0, \|\ \|_{G_1})$ are linear
 isometric, the map $g$ which sends $x \in X$ to $(c_0, \|\
 \|_{G'_{S^x}})$ is Borel and strongly injective (in the sense that if
 $x\neq y \in X$ then $(c_0, \|\ \|_{G_{S^x}})$ and $(c_0, \|\
 \|_{G_{S^y}})$ are not linear isometric) by the proof of Theorem
 \ref{theor2} again. Therefore it is again enough to put $\mathcal{C}
 = {\rm range}(g)$. 
\end{proof}

Obviously, all the previous corollaries can be naturally translated
into the context of analytic equivalence relations. 

\begin{corollary}
 Let $E$ be a binary relation on a standard Borel space $X$. Then the
 following are equivalent: 
\begin{enumerate}
\item $E$ is an analytic equivalence relation on $X$; 
\item there is a Borel class $\mathcal{C} \subseteq \mathscr{D}$ of discrete
  Polish metric spaces closed 
 under isometry such that $E$ is Borel equivalent to the relation of
 isometric bi-embeddabbility on $\mathcal{C}$;
\item  there is a Borel class $\mathcal{C} \subseteq \mathscr{U}$ of
 pairwise non-isometric ultrametric Polish spaces such that $E$ is
 Borel equivalent to the relation of isometric bi-embeddability on
 $\mathcal{C}$;
\item there is a Borel class $\mathcal{C} \subseteq K(I)$ (or even
  just $\mathcal{C} \subseteq K([0,1]^2)$) of 
 pairwise non-homeomorphic compact metrizable spaces such that $E$ is
 Borel equivalent to the relation of continuous bi-embeddability on
 $\mathcal{C}$;
\item  there is a Borel class $\mathcal{C} \subseteq \mathscr{B}$ of
 pairwise non-linear isometric (resp.\ non-isometric) separable Banach
 spaces isomorphic to $c_0$ such that $E$ is Borel equivalent to the
 relation of linear isometric bi-embeddability (resp.\ isometric
 bi-embeddability) on $\mathcal{C}$. 
\end{enumerate}

\end{corollary}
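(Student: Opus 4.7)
The plan is to derive all four nontrivial implications $(1) \Rightarrow (j)$, for $j = 2, 3, 4, 5$, in a uniform way by pushing the quasi-order versions of these statements (Corollaries \ref{cordiscrete}, \ref{corultrametric}, \ref{corcompact}, and \ref{corbanach}, respectively) through a simple symmetry argument. For each $j$ I would first apply the relevant corollary to $E$ viewed as an analytic quasi-order: this produces a Borel class $\mathcal{C}$ in the appropriate ambient space together with Borel reductions $f \colon X \to \mathcal{C}$ and $g \colon \mathcal{C} \to X$ witnessing that $E$ is Borel equivalent to the corresponding embeddability quasi-order $\sqsubseteq_j$ restricted to $\mathcal{C}$.

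The key observation is that when $E$ is symmetric, the reduction $g$ forces $\sqsubseteq_j \restriction \mathcal{C}$ itself to be symmetric: for any $u, v \in \mathcal{C}$,
\[ u \sqsubseteq_j v \iff g(u) \mathrel{E} g(v) \iff g(v) \mathrel{E} g(u) \iff v \sqsubseteq_j u. \]
Consequently $\sqsubseteq_j$ and the corresponding bi-embeddability relation coincide as relations on $\mathcal{C}$, so the same pair $(f, g)$ automatically witnesses that $E$ is Borel equivalent to bi-embeddability on $\mathcal{C}$. The class $\mathcal{C}$ retains all the additional structural features asserted in the original quasi-order corollary: closure under isometry in (2); pairwise non-isometric elements in (3); pairwise non-homeomorphic elements in (4); and pairwise non-linear isometric (resp.\ non-isometric) elements in (5), the latter via the Godefroy--Kalton result cited before Corollary \ref{corbanach}.

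The converse directions $(j) \Rightarrow (1)$ are immediate: being an analytic equivalence relation is preserved downward under $\leq_B$, since a Borel reduction $h$ from a binary relation $R$ on $X$ to an analytic equivalence relation $F$ on some space $Y$ gives $R = (h \times h)^{-1}(F)$, which is analytic, and reflexivity, symmetry, and transitivity of $F$ transfer to $R$ through $x R y \iff h(x) F h(y)$. Since each bi-embeddability relation appearing in (2)--(5) is plainly an analytic equivalence relation, anything Borel equivalent to one of them must be an analytic equivalence relation as well. I do not expect any serious obstacle here; the only care needed is in checking that the constructions behind Corollaries \ref{cordiscrete}--\ref{corbanach} do deliver both directions of the Borel equivalence, so that the symmetry argument can be applied to a reduction $g \colon \mathcal{C} \to X$ defined on all of $\mathcal{C}$.
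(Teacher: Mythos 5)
Your proposal is correct and follows essentially the same route as the paper, which states this corollary without proof as the ``natural translation'' of Corollaries \ref{cordiscrete}--\ref{corbanach}; your symmetry observation (that a reduction of $\sqsubseteq_j\restriction\mathcal{C}$ back to the symmetric relation $E$ forces embeddability and bi-embeddability to coincide on $\mathcal{C}$) is exactly the mechanism the authors use in Corollary \ref{corequiv} and the remark following it. The converse implications via downward closure of ``being an analytic equivalence relation'' under Borel reducibility likewise mirror the argument of Corollary \ref{cor10}.
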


\subsection{Actions of groups and monoids} \label{sectionmonoid} 

 A special kind of analytic equivalence relations is one which is
 induced by the continuous (resp.\ Borel) action of a group:
 given a Polish space $X$, a Polish group $G$ (that is, a group
 equipped with a Polish topology such that the map $(g,h) \mapsto g
 h^{-1}$ is continuous), and an action $a$ of $G$ on $X$ (that is a
 function $a \colon G \times X \to X$ such that $a(e,x) = x$ and
 $a(g,(a(h,x))) = a(g  h,x)$ for every $g,h \in G$ and $x \in X$), for each $x,y \in X$ we
 put $x E^G_X y \iff \exists g \in G (a(g,x)=y)$, 
 and it is easy to check that if $a$ is a continuous (or just
 Borel) function that $E^G_X$ is an analytic equivalence relation on
 $X$. Note however that such an analytic equivalence relation cannot
 be complete, as there are analytic equivalence relations, such as
 $E_1$, that are not  Borel reducible to it (see e.g.\ \cite[Theorem 8.2]{hjorth}).

The natural counterpart of Polish groups in the quasi-order context
are Polish  monoids, i.e.\ semigroups with  identity which are equipped
with a Polish topology such that the monoid operation is a continuous
function. However, it is not clear what should be the right
generalization of the notion of \emph{action}: in \cite{louros}, Louveau and Rosendal chose to
define the action of the monoid $G$ on the Polish space $X$
exactly as an action of a group, that is as a function $a
\colon G \times X \to X$ such that $a(e,x) = x$ and  $a(g,(a(h,x))) =
a(g  h,x)$ for every $g,h \in G$ and $x \in X$, and then proved in
\cite[Theorem 5.1]{louros} that there is a Polish monoid $G$ acting continuously on a
Polish space $X$, such that $E^G_X$ is complete for analytic
quasi-orders. There is however a difficulty with this notion of monoid action: if one looks at some natural quasi-order $R$
induced by some class of morphisms acting on $X$,  then the monoid $G$
consisting of such morphisms together with the composition operation
and the usual identity (and topologized in a natural way) should have
a ``natural'' action on $X$ inducing $R$, and
 this will not in general be true using the notion of action defined above. For 
example, consider the relation of embeddability on the space $X_G$ of
(codes for) countable graphs: in this case the collection of
morphisms is simply the Polish monoid $S_\infty^-$, the set of
all injective (not necessarily onto) functions from $\omega$ into
itself, topologized as a subspace of the Baire space. But there is no
natural function assigning to each pair $(p,x) \in S^-_\infty \times X_G 
$ a \emph{unique} $y \in X_G$ such that $x \sqsubseteq y$, as
there are too many  $y$ in which $x$ can be embedded. One option is to change the notion of monoid in such a way that each of its elements determines in a unique way the target graph, not only how $x$ is embedded into it. We have taken this option in the next 
section in the special case of embeddings on graphs; however, this approach
can be used just for some specific cases. This suggests that
the ``right'' definition of an action of a monoid on a Polish space
should be that of a relation (or, equivalently, of a multi-valued
function) rather than that of a function: here is our proposal. 

\begin{defin}\label{defaction}
 Let $X$ be a Polish space and $G$ a Polish monoid. An action $A$ of
 $G$ on $X$ is a relation $A \subseteq G \times X \times X$ such that
 for every $x,y,z \in X$ and $g,h \in G$ the following holds:  
\begin{enumerate}[i)]
 \item $(e,x,x) \in A$;
\item if $(h,x,y) \in A$ and $(g,y,z) \in A$ then $(gh,x,z) \in A$.
\end{enumerate}
 The action $A$ is said to be \emph{closed} (resp.\ \emph{Borel},
 \emph{analytic}) if it is closed (resp.\ Borel, analytic) as a
 subspace of the Polish space $G \times X \times X$. 
\end{defin}

Note that the usual group action can be identified (passing from
functions to their graph) with those monoid actions which turn out to
be functions from $G \times X$ into $X$, that is with those $A
\subseteq G \times X \times X$ such that for every $(g,x) \in G \times
X$ there is a unique $y \in X$ such that $(g,x,y) \in A$. 
Being closed (resp.\  Borel) for monoid actions is the
analogous of being continuous (resp.\  Borel) for group actions,
although the 
 analogy is not exact in the first case as there are functions with
 closed graphs which are not continuous\footnote{As an example of this fact, consider the function $f \colon \mathbb{R} \to \mathbb{R}$ such that $f(x)=0$ if $x=0$ and $f(x) = x^{-1}$ otherwise.}. In the case of
 group actions the distinction between Borel and analytic simply disappears,
 because a function is Borel if and only its graph is  Borel if
 and only if its graph is analytic if and only if it is analytic-measurable. 

For $A$ an action of $G$ on $X$ we put $x E^{G,A}_X y \iff \exists g
\in G ((g,x,y) \in A)$, and it is easy to check that  $E^{G,A}_X$ is a
quasi-order (by the properties of an action), and that if $A$ is analytic 
then $E^{G,A}_X$ is analytic: we will call $E^{G,A}_X$
the analytic quasi-order induced by the action $A$ of $G$ on $X$. 
Note that if $X$ is just a standard Borel
space (rather than Polish), then we can speak of closed actions on $X$
just in case we fix in advance some Polish topology on $X$ compatible
with its Borel structure, but we can unambiguously speak of Borel (or
analytic) actions, as these notions only depends on the Borel
structure of $X$. 

Now consider again the example of embeddings on the space of graphs
$X_G$: with our new definition, the monoid $S^-_\infty$ 
has a natural closed action on $X_G$, namely 
\[ A = \{ (p,x,y) \in A \mid \forall n,m (n R^{\mathcal{A}_x} m \iff
p(n) R^{\mathcal{A}_y} p(m)) \}\] 
 (where $\mathcal{A}_x$ and $\mathcal{A}_y$ are the graphs coded by
 $x$ and $y$, respectively), and it is clear that the quasi-order
 $E^{S^-_\infty,A}_{X_G}$ is just the relation of
 embeddability on $X_G$. 

\begin{theorem}
Each analytic quasi-order $R$ on a standard Borel space $X$ is
Borel equivalent to a quasi-order on ${}^\omega 2$ induced by a closed
action of $S^-_\infty$. Moreover, $R$ itself is induced by a Borel
action of $S^-_\infty$.  
\end{theorem}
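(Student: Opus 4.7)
My plan is to derive both claims from Theorem \ref{theor2}, which for any analytic quasi-order $R$ on a standard Borel space $X$ produces an $\L'_{\omega_1\omega}$-sentence $\psi$ and an injective Borel reduction $f\colon X \to Mod_{\L'}$ (in fact continuous on the space $X_G = Mod_{\L'}$ of countable graphs) whose range $Mod_\psi = f(X)$ is Borel and isomorphism-invariant, so that $R \sim_B {\sqsubseteq} \restriction Mod_\psi$. Combining this with the natural closed action
$$A := \{(p, x, y) \in S^-_\infty \times X_G \times X_G : \forall n,m\, (n R^{\mathcal{A}_x} m \iff p(n) R^{\mathcal{A}_y} p(m))\}$$
of $S^-_\infty$ on $X_G$ identified in the paragraph just before the theorem (which induces $\sqsubseteq$), the second (``moreover'') assertion becomes immediate: define $A_X \subseteq S^-_\infty \times X \times X$ by $(p,x,y) \in A_X \iff (p, f(x), f(y)) \in A$; then $A_X$ is Borel (as the pre-image of the closed set $A$ under the Borel map $(p,x,y) \mapsto (p, f(x), f(y))$), it satisfies the axioms of Definition \ref{defaction} because the identity is an embedding and embeddings compose, and $E^{S^-_\infty, A_X}_X = R$ because $f$ is a reduction.

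For the first (main) assertion, I first restrict the action $A$ to $Mod_\psi$: the relation $A_\psi := A \cap (S^-_\infty \times Mod_\psi \times Mod_\psi)$ is again an action, induces $\sqsubseteq \restriction Mod_\psi$, and is closed in the subspace topology inherited from $X_G$. Since $Mod_\psi$ is Borel in the Polish space $X_G$, it admits a finer Polish topology $\tau$ with the same Borel sets; refining the topology can only enlarge the family of closed sets, so $A_\psi$ remains closed in $\tau$. Now, discarding the trivial case where $Mod_\psi$ is countable, $(Mod_\psi, \tau)$ is Borel-isomorphic to ${}^\omega 2$ as a standard Borel space; fix any Borel isomorphism $\zeta\colon (Mod_\psi, \tau) \to {}^\omega 2$, and let $\tau'$ be the push-forward Polish topology $\zeta_*\tau$, which is compatible with the standard Borel structure of ${}^\omega 2$ (a choice permitted by the discussion immediately after Definition \ref{defaction}). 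Since $\zeta$ is now a homeomorphism by construction, the set
$$A' := (\operatorname{id}_{S^-_\infty} \times \zeta \times \zeta)(A_\psi)$$
is a closed action of $S^-_\infty$ on $({}^\omega 2, \tau')$ whose induced quasi-order is the $\zeta$-image of $\sqsubseteq \restriction Mod_\psi$, and hence Borel equivalent to $R$.

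I expect the main subtlety to lie in the juggling of topologies: one must refine the topology on $Mod_\psi$ enough to be Polish so that the Borel isomorphism $\zeta$ can be promoted to a homeomorphism with the Polish space $({}^\omega 2, \tau')$; conversely, closedness of the action has to survive both the refinement and the transport, which however comes for free since refining a topology never turns a closed set into a non-closed one, and closedness is obviously preserved along homeomorphisms.
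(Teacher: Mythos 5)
Your treatment of the ``moreover'' clause is correct and is essentially the paper's (implicit) argument: pull the natural closed action $A$ of $S^-_\infty$ on $X_G$ back along $(p,x,y)\mapsto(p,f(x),f(y))$; the pullback of an action along any map is an action, it is Borel because $f$ is Borel and $A$ is closed, and it induces $R$ because $f$ is a reduction.

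For the main clause, however, your construction proves something weaker than what the theorem asserts. You obtain an action that is closed only with respect to an ad hoc Polish topology $\tau'$ on ${}^\omega 2$, manufactured by refining the topology on $Mod_\psi$ and pushing it forward along a Borel isomorphism $\zeta$. But the point of the statement, in light of the discussion following Definition \ref{defaction}, is that ``closed'' is the analogue of ``continuous'' for a \emph{fixed} Polish topology, and for ${}^\omega 2$ the topology is the canonical one; if one is allowed to re-choose the topology after the fact, the distinction between closed and Borel actions that the theorem is drawing largely evaporates. (Your reduction steps are internally consistent --- a relatively closed set stays closed under a refinement, and closedness transports along the homeomorphism $\zeta$ --- but the conclusion lands in the wrong category.) The repair is immediate and uses a fact you already quoted: the reduction $h\colon{}^\omega 2\to X_G$, $h(x)=f((x,\mathrm{id}))$, is \emph{continuous} for the standard topology of ${}^\omega 2$. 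This is exactly what the paper does: after replacing $R$ by its copy $R'$ on ${}^\omega 2$ under a Borel isomorphism (which accounts for the ``Borel equivalent'' in the statement), it sets $(p,x,y)\in A'\iff(p,h(x),h(y))\in A$; since $(p,x,y)\mapsto(p,h(x),h(y))$ is continuous and $A$ is closed, $A'$ is closed in $S^-_\infty\times{}^\omega 2\times{}^\omega 2$ with the standard topologies, and it induces $R'$ outright. This avoids the passage through $Mod_\psi$, the change of topology, and the transversal entirely, and yields the stronger, canonical form of closedness. A minor additional point: you write $Mod_\psi=f(X)$, but in the paper $Mod_\psi$ is the range of $f$ on $X\times Y$ ($Y$ the Borel transversal); the image of $X\times\{\mathrm{id}\}$ alone is not isomorphism-invariant. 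This does not affect your argument, since $\sqsubseteq$ restricted to either set is Borel equivalent to $R$, but the conflation should be avoided.
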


\begin{proof}
As any standard Borel space $X$ is Borel-isomorphic to ${}^\omega 2$,
we just need to prove that any quasi-order $R'$ on ${}^\omega 2$ is
induced by a closed action $A'$ of $S^-_\infty$ on ${}^\omega 2$.  
 Let $A$ be the natural closed action $A$ of $S^-_\infty$ on the space
 of (codes for) graphs $X_G$, and let $f \colon {}^\omega 2 \times
 Y \to X_G$ be the reduction defined in the proof of Theorem
 \ref{theor2}, where $Y$ is the Borel transversal defined in
that proof. Clearly we can assume $id \in Y$. 
Since $f$ is a continuous function, so is $h \colon
 {}^\omega 2 \to X_G$ defined by $x \mapsto f((x,id))$: therefore the action
 $A' \subseteq S^-_\infty \times {}^\omega 2 \times {}^\omega 2$ given 
by $(p,x,y) \in A'
 \iff (p, h(x),h(y)) \in A$ is closed and clearly induces $R$ because
 $h$ is a reduction of $R'$ to $\sqsubseteq$ (which is
 $E^{S^-_\infty,A}_{X_G})$. 
\end{proof}

\section{Applications of the main results and techniques}\label{sectionvaught} 

We would like to explore in this section some applications of the methods and results obtained above.
In \cite{louros}, Louveau and Rosendal suggested that one analyze the possible relationships between bi-embeddability $\equiv_{\mathcal{C}}$ and isomorphism $\cong_\mathcal{C}$ on some $\L_{\omega_1 \omega}$-elementary class $\mathcal{C}$ of countable structures (that is on the class of countable  models of some $\L_{\omega_1 \omega}$-sentence, $\L$ some countable language). As they already noted, these relations can behave very differently. A trivial example is the class $\mathcal{C}$ of countable well-founded linear order of length bounded by some fixed $\alpha < \omega_1$: in this case $\equiv_{\mathcal{C}}$ and $\cong_\mathcal{C}$ coincide. In contrast, some deep results show that $\equiv_\mathcal{C}$ and $\cong_\mathcal{C}$ can be extremely far apart.

\begin{example}
 Let $\mathcal{C}$ be the collection of countable linear orders: then $\equiv_\mathcal{C}$
has only $\aleph_1$ classes (since by Laver's proof of Fra\"iss\'e conjecture $\sqsubseteq_{\mathcal{C}}$ is  a bqo), whereas $\cong_\mathcal{C}$ is $S_\infty$-complete (that is as complicated as it can be) by \cite{friedmanstanley}.
\end{example}

\begin{example}
If $\mathcal{C}$ is the collection of countable graphs (or of combinatorial trees, partial orders, lattices, and so on) then $\equiv_\mathcal{C}$ is a complete analytic equivalence relation by \cite{louros}, while $\cong_\mathcal{C}$ is just $S_\infty$-complete.
\end{example}

Louveau and Rosendal raised the question of whether one can increase these gaps, namely:

\begin{question}\label{questLR1}
 Is there an $\L_{\omega_1 \omega}$-elementary class $\mathcal{C}$ with $\cong_\mathcal{C}$  $S_\infty$-complete but $\equiv_\mathcal{C}$ with countably many classes?
\end{question}

\begin{question}\label{questLR2}
 Is there an $\L_{\omega_1 \omega}$-elementary class $\mathcal{C}$ with $\equiv_\mathcal{C}$   complete analytic but $\cong_\mathcal{C}$ not $S_\infty$-complete?
\end{question}
 In the same vein, a third natural question concerning some possible limitations to the method developed in \cite{louros} was asked as well: 
\begin{question}\label{questLR3}
  Is there an $\L_{\omega_1 \omega}$-elementary class $\mathcal{C}$ with $\equiv_\mathcal{C}$ complete analytic but $\sqsubseteq_\mathcal{C}$ not a complete analytic quasi-order?
\end{question}

The results obtained in the previous section can be used to answer Questions \ref{questLR2} and \ref{questLR3}: in fact, as already observed, Theorem \ref{theor1} and Theorem \ref{theor2} both show (for different languages $\L$) that for every analytic quasi-order $R$ there is an $\L_{\omega_1 \omega}$-elementary class $\mathcal{C}$ such that $R \sim_B \sqsubseteq_\mathcal{C}$ (hence ${{E_R} = {R \cap R^{-1}}} \sim_B {\equiv_\mathcal{C}}$) 
and ${=} \sim_B {\cong_\mathcal{C}}$. This result can be used to find e.g.\ $\L_{\omega_1 \omega}$-elementary classes $\mathcal{C}$ such that ${\equiv_\mathcal{C}} = {\cong_\mathcal{C}}$ (take $R$ to be the equality relation on ${}^\omega 2$) or such that 
${\equiv_\mathcal{C}} \neq {\cong_\mathcal{C}}$ but still ${\equiv_\mathcal{C}} \sim_B {\cong_\mathcal{C}}$ (take $R$ to be any equivalence relation on ${}^\omega 2$ Borel equivalent to the equality relation but such that at least one equivalence class has more than one element: then if $\mathcal{C}$ is the class resulting by the application of our result, $\equiv_{\mathcal{C}}$ will be strictly coarser than $\cong_\mathcal{C}$ but ${\cong_\mathcal{C}} \sim_B {=} \sim_B R = E_R \sim_B {\equiv_\mathcal{C}}$).

Another trivial application of the same result allows one to answer Question \ref{questLR2}: let $R$ be any complete analytic quasi-order, and let $\mathcal{C}$ be the class corresponding to $R$ with respect to the result quoted above. Then $\sqsubseteq_\mathcal{C}$ (and hence also $\equiv_\mathcal{C}$) must be complete analytic (in the corresponding classes of relations), but $\cong_\mathcal{C}$ is just smooth (in fact, Borel equivalent to the equality relation). The same kind of argument shows that we can realize any possible relationship between $\cong_\mathcal{C}$ and $\equiv_\mathcal{C}$ (such as ${\equiv_\mathcal{C}} <_B {\cong_\mathcal{C}}$, or ${\equiv_\mathcal{C}}$ and ${\cong_\mathcal{C}}$ Borel incomparable, and so on) as long as we are content to have ${\cong_\mathcal{C}} \sim_B {=}$. An interesting related problem would be to determine which are the possible pairs of degrees $(\cong_\mathcal{C}, \equiv_\mathcal{C})$ for $\mathcal{C}$ an $\L_{\omega_1 \omega}$-elementary class, but apart from the previous results and some obvious limitations this seems to be difficult.

Question \ref{questLR3} can be answered in the same vein as Question \ref{questLR2}: let $R$ be a  complete analytic equivalence relation (so that, in particular, $E_R = R$), and let $\mathcal{C}$ be the resulting class obtained with Theorem \ref{theor1} or Theorem \ref{theor2}. Then $\equiv_{\mathcal{C}}$ is clearly complete analytic (as ${\equiv_\mathcal{C}} \sim_B E_R = R$), but $\sqsubseteq_\mathcal{C}$ can not be complete as a quasi-order since it is an equivalence relation (in fact, as already observed, in this case we get ${\sqsubseteq_\mathcal{C}} = {\equiv_\mathcal{C}}$). 

Question \ref{questLR2} and \ref{questLR3} can  be modified in the following way: given an $\L_{\omega_1 \omega}$-elementary class $\mathcal{C}$, call the relation $\sqsubseteq_\mathcal{C}$ (resp.\ $\equiv_\mathcal{C}$) \emph{universal} if for every analytic quasi-order $R$ (resp.\ every analytic equivalence relation $E$) there is an $\L_{\omega_1 \omega}$-elementary class $\mathcal{C}' \subseteq \mathcal{C}$ such that $R \sim_B {\sqsubseteq_{\mathcal{C}'}}$ (resp.\ $E \sim_B {\equiv_{\mathcal{C}'}}$). Note that if one of $\sqsubseteq_\mathcal{C}$ or $\equiv_\mathcal{C}$ is universal then it must be also complete analytic in the corresponding class of analytic relations.
Moreover, as Corollary \ref{corequiv} shows, if $\mathcal{C}$ is such that $\sqsubseteq_\mathcal{C}$ is universal then $\equiv_\mathcal{C}$ must be universal as well, and Theorems \ref{theor1} and \ref{theor2} can be rephrased as ``there exists an $\L_{\omega_1 \omega}$-elementary class such that $\sqsubseteq_\mathcal{C}$ (and hence also $\equiv_\mathcal{C}$) is universal''. Here are the natural modifications of Question \ref{questLR2} and Question \ref{questLR3}:

\begin{question}\label{questLR2'}
 Is there an $\L_{\omega_1 \omega}$-elementary class $\mathcal{C}$ such that $\equiv_\mathcal{C}$ is universal but $\cong_\mathcal{C}$ not $S_\infty$-complete?
\end{question}

\begin{question}\label{questLR33'}
 Is there an $\L_{\omega_1 \omega}$-elementary class $\mathcal{C}$ with $\equiv_\mathcal{C}$ universal but $\sqsubseteq_\mathcal{C}$ not universal?
\end{question}

In both cases the answer is positive again, and it is even possible to have a single $\mathcal{C}$ such that $\equiv_\mathcal{C}$ is universal but $\cong_\mathcal{C}$ is not $S_\infty$-complete and $\sqsubseteq_\mathcal{C}$ is not a complete analytic quasi-order (hence, in particular, not universal): in fact, it is enough to consider an $\L_{\omega_1 \omega}$-elementary class $\mathcal{C}'$ such that $\equiv_{\mathcal{C}'}$ is universal and then apply Theorem \ref{theor1} or Theorem \ref{theor2} to such a relation in order to get an $\L_{\omega_1 \omega}$-elementary class $\mathcal{C}$ with the desired properties (one has just to check that $\equiv_\mathcal{C}$ is indeed universal because any Borel subset of $\mathcal{C}'$, hence in particular any $\L_{\omega_1 \omega}$-elementary subclass of $\mathcal{C}'$, will be mapped to an $\L_{\omega_1 \omega}$-elementary subclass of $\mathcal{C}$ by the construction given in the proofs of the quoted theorems).

The answer to Question \ref{questLR1} is of a different nature, and doesn't involve the results of this paper, but we put it here for the sake of completeness and because the following construction (due to H.\ Friedman and Stanley, see \cite{friedmanstanley}) will be used below for different applications. 
Let
  $\langle \cdot, \cdot \rangle \colon \omega \times \omega \to \omega$
 be any bijection: for $\emptyset \neq s \in \seqo$ define the \emph{relevant
   pair of $s$} to be $rp(s) = (s(n),s(m))$, where $n,m$ are such that
 $s$ has length $\langle n,m \rangle +1$. 
Given (a code for) any countable graph $x$,
 we will define a set-theoretical tree $T_x$ in the
 following way: the domain of $T_x$ is given by $\seqo \sqcup
 \omega$ (where $\sqcup$ means disjoint union) if $x$ is not the empty graph and by $\seqo$ otherwise,  and we order $\seqo$ with the inclusion relation. Finally, if $x$ contains at least one edge we adjoin to each element $\emptyset \neq s \in \seqo$ such that
 $rp(s)$ is a pair of linked vertices in $x$ a new terminal immediate successor taken from 
 $\omega$, in such a way that each natural number is the terminal successor of one
 (and only one) element of $\seqo$ (this can be done since for each $n,m \in \omega$ there are infinitely
 many $s \in \seqo$ for which $rp(s) = (n,m)$). As proved in \cite{friedmanstanley}, each
isomorphism between graphs $x$ and $y$ can be naturally ``lifted'' to a
permutation of $\seqo$, and then extended to an isomorphism of
$T_x$ and $T_y$. Conversely, an isomorphism $j$ between $T_x$ and
$T_y$ must send elements of $\seqo$ to elements of $\seqo$ of
the same length: therefore we can
reconstruct from $j$ an isomorphism $i$ between $x$ and $y$ using a back and forth argument. Now let $\mathcal{C}$ be the class of infinite set-theoretical countable trees of height $\omega$ such that each node is either terminal or has infinitely many immediate successor, and such that in the latter case at most one of those successors is terminal (we leave to the reader to show that there is an $\L_{\omega_1 \omega}$-sentence $\fhi$ in the language of trees $\L$ such that $\mathcal{C} = Mod_\fhi$): then each tree of the form $T_x$ belongs to $\mathcal{C}$, whence $\cong_\mathcal{C}$ is $S_\infty$-complete, but each element of $\mathcal{C}$ can be embedded in any other element of $\mathcal{C}$ since any set-theoretical tree of height $\omega$ can be embedded in $\seqo$, and the last condition defining $\mathcal{C}$ easily imply that a tree in $\mathcal{C}$ must contain an isomorphic copy of $\seqo$ as subtree. This proves that $\mathcal{C}$ is such that $\cong_\mathcal{C}$ is $S_\infty$-complete but $\equiv_\mathcal{C}$ has exactly $1$ equivalence class (hence, in particular, provides another situation in which ${\equiv_\mathcal{C}} <_B {\cong_\mathcal{C}}$, but with the further property that $\cong_\mathcal{C}$ is as complicated as it can be).\\

The following different application of the methods developed in the previous section can in particular be  intended as an
explanation of why we heuristically  feel that Theorem \ref{theor2}
is really a stronger result than Theorem \ref{theor1}: our explanation
is based on some connections with the Vaught's conjecture, so let us
first recall some definitions and results regarding that topic. Here
is the statement of (a version of)  Vaught's conjecture, which is
still  open. 

\begin{Vaughtconj}\label{Vaught}
 For every countable language $\L$ and every $\L_{\omega_1
   \omega}$-sentence $\fhi$, there are either at most countably many
 or perfectly many non-isomorphic countable models of $\fhi$.  
\end{Vaughtconj}

Vaught's conjecture can be used to measure the ``complexity'' of
various theories in the following way.
Given an $\L_{\omega_1 \omega}$-sentence $\fhi$, we say that $\fhi$
(or, equivalently, the collection of all its countable models
$Mod_\fhi$) \emph{strongly satisfies Vaught's conjecture}, if for
every $\L_{\omega_1\omega}$-sentence $\psi$ such that $\psi \imp \fhi$
we have that either there are at most countably many non-isomorphic
countable models of $\psi$, or else there are perfectly many
non-isomorphic countable models of $\psi$. This can be equivalently
rephrased by requiring that every Borel invariant (with respect to
isomorphism) subset $X$ of $Mod_\fhi$ has either contably many or
perfectly many $\cong$-classes. Note also that this is stronger than
just requiring that $\fhi$ has either countably many or perfectly many
non-isomorphic countable models. In what follows we confuse
(countable) $\L_{\omega_1 \omega}$-theories $T$ with the $\L_{\omega_1
  \omega}$-sentences given by the (infinite) conjunction of the sentences in
$T$. Rubin \cite{rubin, rubin2}, Miller \cite{miller} and Steel
\cite{steel} showed that the theories of, respectively, linear orders,
unary operations (or even just graphs such that each connected
component has only finitely many loops), and trees (partially ordered sets in which the set of predecessors of any element
is linearly ordered) strongly satisfy Vaught's conjecture. The
result of Miller, in particular, implies that also the theory of (rooted)
combinatorial trees strongly satisfies Vaught's conjecture. 

Theories which strongly satisfy Vaught's conjecture can be
regarded in this context as ``simple'' theories. On the opposite side
there are those theories which have the property that if they strongly
satisfy Vaught's conjecture then Vaught's conjecture \ref{Vaught}
holds: we call these theories \emph{Vaught's conjecture-complete}
(\emph{VC-complete} for short), and can be regarded in the present context
as the most complicated theories\footnote{There is an
  analogy between Vaught's conjecture and the problem $P = NP$ in
  theoretical computer science: theories correspond to decision problems, theories
  strongly satisfying Vaugh's conjecture correspond to decision
  problems belonging to $P$, and VC-complete theories correspond to
  $NP$-complete decision problems.}. It is a folklore result that
 the theory of graphs is VC-complete. We will now show that also
the theory of rooted ordered combinatorial trees $ROCT$ is VC-complete, and
therefore (unless Vaught's conjecture is proved!) more complicated than the theory of (rooted)
combinatorial trees: this shows that Theorem \ref{theor2} is in a
sense stronger than Theorem \ref{theor1}, because it proves that the
 ``universality'' property of $\sqsubseteq_{ROCT}$ expressed by Theorem \ref{theor1} is shared
also by the relation of embeddability on a ``simpler''
theory. 

\begin{theorem}\label{theorVCcomplete}
 The theory of (rooted) ordered combinatorial trees (whose ordering is given by an equivalence relation) is VC-complete.
\end{theorem}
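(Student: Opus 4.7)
It is a folklore result that the theory of graphs is VC-complete, so it suffices to produce a Borel, isomorphism-preserving map from countable graphs to rooted ordered combinatorial trees with equivalence-relation ordering whose image lies in an $\L_{\omega_1 \omega}$-elementary subclass $\mathcal{C}$ whose $\cong$-classes are in canonical bijection with those of the graphs.

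Given a countable graph $G$ on $\omega$, the plan is to define $\tilde{G}$ as the rooted ordered combinatorial tree with vertex set $\{r\} \sqcup \{v_n \mid n \in \omega\} \sqcup \{w_{n,m} \mid n \neq m \in \omega\}$, edges $\{r, v_n\}$ for each $n$ and $\{v_n, w_{n,m}\}$ for each $n \neq m$, root $r$, and equivalence-relation ordering $\sim_G$ whose only non-trivial equivalences are $w_{n,m} \sim_G w_{m,n}$ for each edge $\{n,m\}$ of $G$. Intuitively, the underlying combinatorial tree serves as a rigid three-level ``skeleton'' common to every graph, and the edges of $G$ are recorded precisely by the non-trivial $\sim_G$-classes at level two.

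The main step is to verify that $G \cong G'$ iff $\tilde G \cong \widetilde{G'}$. The forward direction is immediate: a graph isomorphism $\pi$ lifts to the tree isomorphism $\tilde\pi(r) = r$, $\tilde\pi(v_n) = v_{\pi(n)}$, $\tilde\pi(w_{n,m}) = w_{\pi(n), \pi(m)}$. For the converse, any isomorphism $\tilde\pi$ fixes the root and hence induces a bijection $\pi$ on $\omega$ via its action on the level-one vertices; tracking the size-$2$ equivalence classes (which pair $w_{n,m}$ with $w_{m,n}$ whenever $\{n,m\}$ is an edge of $G$) shows that $\{n,m\}$ being an edge of $G$ forces $\{\pi(n), \pi(m)\}$ to be an edge of $G'$, and applying the same argument to $\tilde\pi^{-1}$ yields the converse implication, so $\pi$ is a graph isomorphism.

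The map $G \mapsto \tilde G$ is continuous in a natural coding, and the invariant closure of its image is easily seen to be the Borel invariant class of structures whose root has $\omega$ children, each child has $\omega$ children, there are no further descendants, and each non-singleton equivalence class has exactly two elements which are grandchildren of distinct level-one vertices. By Lopez-Escobar this is the class of models of some $\L_{\omega_1\omega}$-sentence, whose $\cong$-classes are in bijection with those of the countable graphs via the above construction, so any $\L_{\omega_1\omega}$-elementary subclass of graphs witnessing a failure of Vaught's conjecture pulls back to an $\L_{\omega_1\omega}$-elementary subclass of rooted ordered combinatorial trees with equivalence-relation ordering having the same $\cong$-class count. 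Hence this theory does not strongly satisfy Vaught's conjecture, which establishes VC-completeness. The most delicate point is the backward direction of the iso-preservation claim, since a priori the bijections among the children of ``isolated'' level-one vertices (those incident to no size-$2$ equivalence class) are unconstrained; the symmetric use of $\tilde\pi$ and $\tilde\pi^{-1}$ sidesteps this issue by directly comparing edges of $G$ and $G'$ through the pairing induced at level two.
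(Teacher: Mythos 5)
Your overall strategy is sound and is in fact a genuinely different (and slicker) encoding than the paper's: the paper routes through the Friedman--Stanley trees $T_x$, recording the edges of $x$ in the \emph{tree structure} (terminal successors) while keeping the equivalence relation independent of $x$, and then controls the isomorphism-saturation of the image by a transversal argument for cosets of two subgroups of $S_\infty$; you instead fix a single three-level tree and record the edges entirely in the equivalence relation. The forward and backward iso-preservation arguments for $G\mapsto\tilde G$ are correct as you give them. However, there is a genuine gap at the step you dismiss as ``easily seen'': the invariant closure of the image is \emph{not} the class you describe, and with the class as described the transfer of $\cong$-class counts fails. Two things go wrong. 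First, your axioms do not forbid two distinct $2$-element classes joining grandchildren of the same pair of level-one vertices, so structures encoding ``multigraphs'' satisfy them while being isomorphic to no $\tilde G$. Second, and more damagingly, the axioms do not control how many \emph{singleton-class} children a level-one vertex has. In $\tilde G$ the vertex $v_n$ has exactly $|\{m\neq n: \{n,m\}\notin E(G)\}|$ singleton children, but a structure satisfying your axioms may have any number $c_n\in\omega\cup\{\omega\}$ of extra singleton children at $v_n$ whenever $n$ has infinite degree in the decoded graph. Over the single isomorphism class of the complete graph $K_\omega$, the multiset $\langle c_n\rangle_{n\in\omega}$ already yields perfectly many pairwise non-isomorphic structures in your class. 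Consequently, for a Borel invariant $X\subseteq X_G$ with uncountably but not perfectly many classes whose members have vertices of infinite degree, the class $Z$ you produce may have perfectly many classes, and is then useless as a witness that the theory fails to strongly satisfy Vaught's conjecture.

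The gap is fixable, and in two ways. Either add the missing axioms (``each pair of level-one vertices is joined by at most one non-singleton class'' and, for each $k\in\omega\cup\{\omega\}$, ``$v$ has exactly $k$ singleton-class children iff exactly $k$ other level-one vertices are not joined to $v$'', a countable disjunction of $\L_{\omega_1\omega}$-conditions), after which the described class really is the invariant closure and your decoding map is a Borel reduction inverse to $G\mapsto\tilde G$ on classes; or, more simply, pad the construction by giving each $v_n$ infinitely many additional leaf children in singleton classes, so that the isomorphism type of $\tilde G$ is insensitive to the degrees of $G$ and the axiomatization you wrote (plus the ``at most one joining class'' clause) characterizes the closure exactly. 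With either repair, your argument goes through and gives a correct alternative to the paper's proof; the bookkeeping you must do here is the analogue of the paper's $H_1$, $H_2$ transversal computation, which is precisely the part of the proof that cannot be waved away.
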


\begin{proof}
The proof is a combination of the Friedman-Stanley's construction explained above with the technique developed in the proof of
Theorem \ref{theor2}. 
 Let $X_G$ and $X_{ROCT}$ be the set of (codes for) countable
 graphs and rooted ordered combinatorial trees\footnote{As it will be easy to check, we can also replace rooted ordered combinatorial trees with ordered combinatorial trees: in the construction of the $G_x$'s given below, instead of specifying $\emptyset$ as root, simply adjoin two new vertices $r$ and $r'$, and then link $r$ to both $\emptyset$ and $r'$. It is easy to check that any embedding between two such trees must again send $\emptyset$ into itself, so the rest of the argument can be carried out exactly in the same way.}, respectively. 
It is enough to show that for every Borel invariant $X
 \subseteq X_G$ there is a Borel invariant $Z \subseteq X_{ROCT}$ such
 that $(X,\cong) \sim_B (Z,\cong)$. Note that we
 can always assume that $X$ doesn't contain the empty graph. To each $x \in X$, associate the rooted ordered combinatorial tree $G_x$ defined on $\seqo \sqcup \omega$ (with $\emptyset$ as root) by stipulating that two elements of $G_x$ are adiacent just in case one of them is an immediate successor of the other in the tree $T_x$ defined above, and that two elements $s,t$ are in the order relation of $G_x$ if and only if 
\[ (s = t = \emptyset) \vee ({s,t \in \seqo \setminus \{ \emptyset \}} \wedge {rp(s) = rp(t)}) \vee (s,t \in \omega). \] 
Note that such order relation is
  an equivalence relation which moreover is
\emph{independent}
 from the graph $x$. If we choose to link natural numbers to
 sequences in a careful way, e.g.\ respecting any fixed order of $\seqo$
 of type $\omega$, then the map which sends $x$ to (the code of) $G_x$
 is Borel (in fact, continuous). 
By repeating the Friedman-Stanley's proof  about the trees $T_x$ sketched above, it is easy to check that this map reduces the isomorphism relation on
any Borel invariant  $X \subseteq X_G$ to the isomorphism relation on $X_{ROCT}$ (it is enough to check that the ``lifting'' of any isomorphism between graphs $x$ and $y$ to a permutation of $\seqo$ preserves the order relations of $G_x$ and $G_y$).

For simplicity of notation, identify $S_\infty$ with the group of
permutations of $\seqo \sqcup \omega$. Consider the following closed
subgroups of $S_\infty$: 
\begin{align*}
 H_1 = \{p \in S_\infty \mid & \forall s,t \in \seqo ({rp(s) =
   rp(t)} \iff {rp(p(s)) = rp(p(t))} \\ 
& \wedge \forall n \in \omega (p(n)=n)\}
\end{align*}
and
\[ H_2  = \{ p \in S_\infty \mid \forall s \in \seqo (p(s) = s)\}.\]
 By \cite[Theorem 12.17]{kechris} again, there is a Borel selector
 for the equivalence relation on $S_\infty$ whose classes are the
 (left) cosets of $H_1$. Let $Y$ be the corresponding Borel transversal,
 and consider the equivalence relation $E$ on $X \times H_2 \times Y$
 ($X$ as before) defined by $(x,p_1,q_1) E (y,p_2,q_2) \iff x \cong
 y$. Clearly $E$ is Borel equivalent to isomorphism on $X$, and the map $f \colon X \times H_2 \times Y \to X_{ROCT} \colon
 (x,p,q) \mapsto j_\L(q \circ p,G_x)$, $\L$ the language of rooted ordered
 combinatorial trees, is a continuous reduction of $E$ into $\cong$ on
 $X_{ROCT}$. As in the proof of Theorem \ref{theor2}, it is therefore
 enough to show that $f$ is injective and that its range $Z$ is
 invariant under isomorphism. 

Assume that $x,y \in X$, $p_1,p_2 \in H_2$ and $q_1,q_2 \in Y$ are
such that $f((x,p_1,q_1)) = f((y,p_2,q_2))$. Then $p_2^{-1} \circ
q_2^{-1} \circ q_1\circ p_1$ is an isomorphism between $G_x$ and $G_y$
and since it must respect their order relations and both $p_2^{-1}$
and $p_1$ are the identity on $T_\infty$, we must conclude that
$q_2^{-1} \circ q_1 \in H_1$: but then $q_1$ and $q_2$ are in the same
left coset of $H_1$, which means $q_1 = q_2$ (since $Y$ is a
transversal). This implies $x = y$ because $p_2^{-1} \circ
q_2^{-1} \circ q_1\circ p_1 = p_2^{-1} \circ p_1 \in H_2$ is an
isomorphism between $G_x$ and $G_y$, and hence we get also $p_2^{-1}
\circ p_1 = id$, that is $p_1 = p_2$. This proves the injectivity of
$f$. 

Finally we will prove that ${\rm range}(f)$ is the closure under
isomorphism of $\{ G_x \mid x \in X \}$.  
First note that if $h \in H_1$ and $x \in X$, then there is $y \in X$
and $p \in H_2$ such that $j_\L (p,G_y) = j_\L(h,G_x)$ (this is
because $X$ is invariant under isomorphism). 
Now choose $r \in S_\infty$ and $x \in X$, and let $q \in Y$ be in the
same left coset of $H_1$ as $r$, so that $r = q \circ h$ for some $h
\in H_1$. Let $y$ and $p$ be as in the observation above. 
Then $j_\L(q \circ p,G_y) = j_\L(q \circ h, G_x) = j_\L(r,G_x)$, so
that $f((y,p,q)) = j_\L(r,G_x)$ and we are done. 
\end{proof}

Note that an obvious modification of the previous proof gives that also the theory of ordered trees (that is of set-theoretical trees with an
 extra transitive relation on their nodes) is VC-complete. 

 Besides showing that \emph{ordered} (rooted) combinatorial trees have
 seemingly more ``universal'' properties  than combinatorial trees
 (namely, the fact of being VC-complete), Theorem
 \ref{theorVCcomplete} should also be compared with the well-known
 argument used to show that the theory of graphs is VC-complete. In
 that case, one makes a crucial use of \emph{infinitely many} loops: if
 one could find a similar argument which avoids loops (or uses only
 finitely many of them in each connected component), then applying
 Miller's result \cite{miller} one would get a proof of  Vaught's
 conjecture. Theorem \ref{theorVCcomplete} shows that we can
 completely avoid loops, but (as far as we know) we have to compensate
 for this with the addition of a new transitive relation.

\section{Open problems}\label{sectionopenproblems}

We collect in this section some open problems related to some of the
results presented in Section \ref{sectionmain}.

We say that a function $f$ is an \emph{epimorphism} between countable $\L$-structures $H$ and $G$ if it is a surjective homomorphism  of $H$ onto $G$, and that $f$ is a \emph{weak-epimorphism} if it is a surjective weak-homomorphism.
Clearly the relations ``$G \preceq H$ if and only if $G$ is the epimorphic image of $H$'' and ``$G \preceq_w H$ if and only if $G$ is the weak-epimorphic image of $H$'' are analytic quasi-orders, and  Camerlo \cite{camerlo} showed that $\preceq_w$ on countable graphs is complete for analytic quasi-orders. Therefore we have the following natural question.

\begin{question}\label{questionepi}
 Is it true that for every analytic quasi-order $R$ on a standard Borel space $X$ there is an $\L_{\omega_1 \omega}$-sentence $\fhi$ ($\L$ some countable language) such that $R$ is Borel equivalent to $\preceq$ (respectively, $\preceq_w$) when restricted to $Mod_\fhi$?
\end{question}

Let us point out that combinatorial graphs are unlikely to play any role in answering this question: Proposition \ref{prophomo} shows that almost every epimorphism between combinatorial trees (in particular, every epimorphism between the combinatorial trees used in our constructions) is actually an isomorphism, so that $\preceq$ on combinatorial trees is probably more or less as complex as $\cong$ on combinatorial trees, while $\preceq_w$ on combinatorial trees seems to be a very simple relation, as it can be shown e.g.\ that any combinatorial tree with unbounded diameter is bi-weak-epimorphic to the combinatorial tree $\bar{G}$ defined in Remark \ref{remweakhomo}. It could be that the combination of the construction given in \cite{camerlo}  with the techniques developed in this paper would yield an answer to Question \ref{questionepi}, but we leave this possibility for future research. 

A similar questions can be asked about the relation of elementary embeddability between countable $\L$-structures, although (as far as we know) it is not even known if the corresponding quasi-order is complete analytic.

\begin{question}
Is there any $\L_{\omega_1 \omega}$-sentence $\fhi$ ($\L$ some countable language) such that the the quasi-order induced by elementary embeddability on $Mod_\fhi$ is complete analytic?

 Is it true that for every analytic quasi-order $R$ on a standard Borel space $X$ there is an $\L_{\omega_1 \omega}$-sentence $\fhi$ such that $R$ is Borel equivalent to elementary embeddability on $Mod_\fhi$?
\end{question}

Regarding Section
\ref{sectionanalysis}. There is an evident qualitative difference
between Corollary \ref{cordiscrete} on the one hand, and Corollaries
\ref{corultrametric}, \ref{corcompact} and \ref{corbanach} on the
other: in the first case we get a Borel class $\mathcal{C}$ which
is \emph{saturated} with respect to isometry, while in the second case
we get classes $\mathcal{C}$ which are very far from being saturated
with respect to the corresponding bijective morphisms (namely
isometries, homeomorphisms and linear isometries, respectively).

\begin{question}\label{questcor}
 Can one strenghten Corollaries \ref{corultrametric}, \ref{corcompact}
 and \ref{corbanach} by requiring that the Borel class $\mathcal{C}$
 is closed under, respectively, isometries, homeomorphisms and linear
 isometries? 
\end{question}

 What is missing to  positively answer this question is a
 method which allows one to saturate ``in a Borel way'' the class
 $\mathcal{C}$, something which is possible if the class of
 bijective morphisms would form a natural Polish group. For example,
 if the class of isometries between arbitrary ultrametric Polish
 spaces (viewed as elements of $F(\mathbb{U})$) would form a Polish
 group acting on $F(\mathbb{U})$, then it would be very likely that we
 could repeat the argument used in Theorem \ref{theor2} to prove that the
 saturation of $\mathcal{C}$ is still Borel. However, one should note
 that these isomorphisms are just \emph{partial} isomorphisms of
 $\mathbb{U}$ into itself, and in general they cannot be extended to
 an automorphism of $\mathbb{U}$, so the fact that the automorphisms of $\mathbb{U}$ form a Polish group acting on $F(\mathbb{U})$ is of no use here. Moreover, we can not see any other
 natural way to view these partial isomorphisms as a Polish group acting on $F(\mathbb{U})$, 
therefore we suspect that
 an aswer to Question \ref{questcor} would necessarily employ different
 techniques. 

We end with a question regarding Section \ref{sectionmonoid}. Call an action of a Polish
monoid \emph{functional} if it is the graph of a function (so that
functional actions coincide with the actions considered in
\cite{louros}). 

\begin{question}\label{quest3}
 Is it true that any analytic quasi-order $R$ on a standard Borel
 space $X$ is Borel equivalent to a quasi-order induced by a
 continuous (or just Borel) \emph{functional} action of a Polish
 monoid? 
\end{question}

The best result that we have in this direction is the following: 
the quasi-order of embeddability on countable graphs is induced
by the continuous functional action of a Polish monoid (although
neither the monoid nor the action are very ``natural''). This
strengthens a little bit \cite[Theorem 5.1]{louros}, since it gives a
more canonical and natural example of an  complete analytic 
quasi-order induced by a functional 
continuous action of a Polish monoid. Our example can also  be easily extended to cover other important cases, such as embeddability on combinatorial trees and so on, but it seems that it doesn't work in the general context of Borel invariant subsets of $Mod_\L$ (such a generality would clearly answer Question \ref{quest3} by Theorem \ref{theor2}).
Here is the construction (we
leave to the reader the 
proof of the fact that the proposed monoid is Polish and that the
action is continuous): the monoid $M \subseteq S^-_\infty \times
{}^\omega 2 \times X_G$ is defined by
\begin{align*}
 M = \{ (p,u,v) \mid & {\forall n \in \omega ({u(i) = 1} \iff {\exists m
\in \omega (p(m) = n))}} \wedge \\
& \wedge { \forall n,m \in \omega ({{u(n) =1} \wedge
{u(m) = 1}} \imp {v(\langle n,m \rangle) = 0})}\}.
\end{align*}

The real $u$ is used to determine whether or not an element is in the range
of the injection $p$, and is instrumental in proving that the
operation of the monoid defined below is continuous. The graph $v$
defines  the structure of the target graphs of the action of $M$
\emph{outside} the range of the embedding $p$ --- see the discussion
in Section \ref{sectionmonoid}. Define now first the functional action
of $M$ on $X_G$ as the function
\[ a \colon M \times X_G \to X_G \colon (g,x) \mapsto y, \]
where $g = (p,u,v)$ and $y$ is such that 
\[ y(\langle n,m \rangle) = 
\begin{cases}
x(\langle p^{-1}(n),p^{-1}(m) \rangle) & \text{if }u(n)= u(m) =
1,\\ 
v(\langle n,m \rangle) & \text{otherwise.}
\end{cases}
\]
 Finally, define the
product operation $(h,g) \mapsto hg$
in the unique way which 
really turns $a$ into an action, that is in such a way  that $a(h,a(g,x)) =
a(hg,x)$: if $g = (p_1,u_1,v_1)$ and
$h = (p_2,u_2,v_2)$ then $h  g = (q,t,w)$, where $q = p_2 \circ p_1$,
${t(n) = 1} \iff 
{\exists m(q(m) = n)}$, 
and 
\[ w(\langle n,m \rangle) =  
\begin{cases}
0 & \text{if } t(n) = t(m) = 1,\\
v_2(\langle n,m \rangle) & \text{if } v_2(n) =0 \text{ or } v_2(m) = 0,
\\
v_1(\langle p_2^{-1}(n), p_2^{-1}(m) \rangle) & 
\text{otherwise.}
\end{cases}
\]

\end{document}